\theoremstyle{plain}
\newtheorem{theo}{Theorem}
\newtheorem{theorem}{Theorem}
\newtheorem{lemma}[theo]{Lemma}
\newtheorem{prop}[theo]{Proposition}
\theoremstyle{definition}
\newtheorem{definition}{Definition}
\newtheorem{remark}{Remark}
\newtheorem{ques}{Question}
\title{Finite projective planes meet spectral gaps}
\author[1,2,3]{Yuhan Guo\footnote{Email address: guoyuhan25@mails.ucas.ac.cn}}
\author[3]{Dong Zhang\footnote{Email address: dongzhang@math.pku.edu.cn}}
\affil[1]{HLM, Academy of Mathematics and Systems Science, Chinese Academy
 of Sciences, Beijing, 100190, China }
 \affil[2]{School of Mathematical Sciences, University of
 Chinese Academy of Sciences, Beijing 100049, China}
\affil[3]{%LMAM and 
School of Mathematical Sciences, Peking University, Beijing 100871, China}
\date{}
\begin{document}
\bibliographystyle{plain} 

\maketitle

\begin{abstract}
We show that for any connected graph $G$ with maximum degree $d\ge3$, the spectral gap from $0$ with respect to the adjacency matrix is at most $\sqrt{d-1}$, with equality if and only if $G$ is the incidence graph of a finite projective plane of order $d-1$; and for other cases, the bound $\sqrt{d-1}$ is improved to $\sqrt{d-2}$. This is a spectral gap version of a result by Mohar and Tayfeh-Rezaie. Moreover, for $d$-regular graphs with girth at least 7, the bound $\sqrt{d-2}$ is further improved to $\sqrt{d-c(d)}$ where $c(d)\ge 2$ and $\lim\limits_{d\to\infty}c(d)/d=(\sqrt{5}-1)/2$. %$c(d)/d$  to $(\sqrt{5}-1)/2$ as $d\to+\infty$. 

A similar yet more subtle phenomenon involving the normalized Laplacian is also investigated, where we work on graphs of degrees $\ge d$ rather than $\le d$. We prove that for any graph $G$ with \emph{minimum} degree $d\ge 3$, the spectral gap from the value 1 with respect to the normalized Laplacian is at most $\sqrt{d-1}/d$, with equality if and only if $G$ is the incidence graph of a finite projective plane of order $d-1$. 
%Furthermore, other than incidence graphs of finite projective planes, the bound $\sqrt{d-1}/d$ can be replaced by $\sqrt{d-1}/d$
%Our results are closely related to the Alon-Boppana bound on regular graphs and a recent progress by Koll\'ar and Sarnak on cubic graphs. 
%These results are spectral gap analogues to an inequality involving the HL-index by Mohar and Tayfeh-Rezaie, as well as an estimate of the energy per vertex by van Dam, Haemers and Koolen. 
%Based on this result,
As an application, we provide a new sharp bound for the convergence rate of some eigenvalues of the Laplacian on the weighted neighborhood graphs introduced by Bauer and Jost.
\vspace{0.2cm}

\noindent {\bf Keywords:} %Spectral graph theory; 
Adjacency matrix; Normalized adjacency matrix; Normalized Laplacian; Spectral gap; Finite projective plane; Neighborhood graph 
\end{abstract}

\section{Introduction}

In this paper, we consider  linear operators associated to a connected, finite, simple graph $G=(V,E)$ on $N\geq 3$ vertices. For a vertex $v\in V$, we denote by $\deg v$ its degree, that is, the number of its neighbors. The degree matrix of $G$ is denoted by $\mathbf{D}(G):=\mathrm{diag}(\deg v_1,\cdots,\deg v_N)$, where $\{v_1,\cdots,v_N\}=V$. 
We use the notion $\mathbf{A}(G)$ to represent the adjacency matrix of $G$. For simplicity, we usually write $\mathbf{D}$ and $\mathbf{A}$ rather than $\mathbf{D}(G)$ and $\mathbf{A}(G)$. 
%We consider Laplace operators, more precisely, the normalized Laplacian of a finite graph. 
The normalized Laplacian of $G$ is simply defined by $\mathbf{L}:=\mathbf{I}-\mathbf{D}^{-1}\mathbf{A}$, where $\mathbf{I}$ is the identity matrix.  
We use $\sigma(\mathbf{A})$ and $\sigma(\mathbf{L})$ to denote the spectra of the adjacency matrix $\mathbf{A}$ and the normalized Laplacian $\mathbf{L}$, respectively. 

For the adjacency matrix, a systematic analysis of spectral gaps is presented by Koll\'ar and Sarnark \cite{Kollar20}, and they have identified many beautiful classes of graphs with a particular structure of their spectra and gap intervals. By considering other quantities involving adjacency eigenvalues (e.g., HL-index and the energy per vertex), there is rich information on their extremal graphs \cite{Mohar16,vanDam}. 

The normalized Laplacian generates random walks and diffusion processes on graphs. Previous works on the normalized Laplacian spectral gaps from 0 and from 2 have involved the Cheeger inequality \cite{Chung97} and the dual Cheeger inequality \cite{Bauer13,Trevisan}, with numerous significant applications in spectral clustering and MaxCut problems \cite{Trevisan}. 
It is also interesting that the normalized Laplacian spectral gap from 1 is closely related to the convergence rate of random walks on graphs \cite{Bauer13}.

Another central object in this paper is the finite projective plane \cite{Veblen,BBFT,Thompson}, which has been studied for more than a century and continues to attract widespread attention. As an important topic in incidence geometry, the study of finite projective planes is directly related to combinatorial designs \cite{Stinson}, and projective geometries \cite{Hirschfeld}. 
We shall give the history remark and detailed definition of finite projective planes in Section \ref{sec:projective}. 

The central discovery proposed in the paper is that: {\sl whether employing adjacency matrix or normalized Laplacian, the incidence graphs of finite projective planes are extremal graphs for the spectral gap from the average of eigenvalues}. 

%We are now ready to present the first one of our main results:
Precisely, in the case of adjacency matrix, we characterize the extremal graphs for the spectral from 0 as follows.
\begin{theorem}\label{th:main}
Given $d\ge 3$, for any connected graph $G$ with \textbf{maximum} degree $\le d$, 
$$\min_{\lambda\in \sigma(\mathbf{A})}|\lambda|\le \sqrt{d-1}$$
%and the equality is achieved if and only if there exists a finite projective plane of order $d-1$ and $G$ is the incidence graph of any finite projective plane of order $d-1$. 
with equality if and only if $G$ is the incidence graph of a finite projective plane of order $d-1$. \; 
Furthermore, if a connected graph $G$ has maximum degree $\le d$, and is not the incidence graph of a finite projective plane, then $$\min_{\lambda\in \sigma(\mathbf{A})}|\lambda|\le \sqrt{d-2}.$$
\end{theorem}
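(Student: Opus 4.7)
The plan is to study the positive semidefiniteness of $B := \mathbf{A}^2 - (d-1)\mathbf{I}$, whose eigenvalues are exactly $\lambda^2 - (d-1)$ for $\lambda \in \sigma(\mathbf{A})$. Under this lens, $\min_\lambda |\lambda| > \sqrt{d-1}$ corresponds to $B \succ 0$ and the equality case $\min_\lambda |\lambda| = \sqrt{d-1}$ to $B \succeq 0$. The aim is to translate these spectral conditions into a rigid combinatorial structure and identify the incidence graph of a projective plane.

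First I would force $d$-regularity. The diagonal entry $B_{vv} = \deg v - (d-1)$ is strictly positive under $B \succ 0$, giving $\deg v = d$ in view of $\deg v \le d$. In the equality case, any vertex $v$ with $\deg v = d-1$ would satisfy $B_{vv} = 0$ and hence $B e_v = 0$, forcing $|N(u) \cap N(v)| = 0$ for every $u \ne v$. This confines $V$ to the closed $1$-neighborhood of $v$, making $G$ a subgraph of $K_d$; a short case analysis using $d \ge 3$ and $N \ge 3$ rules this out.

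Once $G$ is $d$-regular, $B$ has diagonal $1$ and integer off-diagonals $B_{uv} = |N(u) \cap N(v)|$ pinned to $\{0, 1\}$ by the PSD Cauchy--Schwarz bound. Writing $B = M^\top M$ as a Gram matrix, the columns of $M$ are unit vectors with pairwise inner products in $\{0, 1\}$, so they partition into orthogonal equivalence classes $S_1, \ldots, S_k$ of equal vectors, and $B = \bigoplus_{i=1}^k J_{|S_i|}$ after reordering. Comparing with $B \mathbf{1} = (d^2 - d + 1)\mathbf{1}$ forces $|S_i| = d^2 - d + 1$ for every $i$. Connectivity then pins down $k$: setting $T_j := \{x \in V : N(x) \subseteq S_j\}$, the cross-block relation $B_{uv} = 0$ yields $N(S_i) \subseteq T_i$ and shows the $T_j$'s also partition $V$. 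Writing $X_{ij} := S_i \cap T_j$, every edge of $G$ runs between $X_{ij}$ and $X_{ji}$, so connectivity leaves exactly one unordered pair $\{i, j\}$ carrying edges; combined with $|S_i| > 0$ this yields $k \le 2$. The friendship theorem rules out $k = 1$ for $d \ge 3$, so $k = 2$. In the strict case one would need $k = N \ge 3$, a contradiction; in the equality case $k = 2$ produces a $d$-regular bipartite graph with parts of size $d^2 - d + 1$ in which any two vertices in the same part share a unique common neighbor --- i.e., the incidence graph of a projective plane of order $d-1$.

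The main obstacle is the sharper bound $\sqrt{d-2}$ in the non-projective-plane case. The natural attempt is to replace $B$ by $B' := \mathbf{A}^2 - (d-2)\mathbf{I}$ and rerun the analysis under $B' \succ 0$. Positivity of the diagonal now only yields $\deg v \ge d-1$, and Cauchy--Schwarz allows $|N(u) \cap N(v)| \in \{0, 1, 2\}$, so the Gram structure is considerably less rigid. If the maximum degree of $G$ is at most $d-1$, applying the first part with $d-1$ in place of $d$ gives $\min_\lambda |\lambda| \le \sqrt{d-2}$ immediately. The genuine difficulty is when $G$ contains a vertex of degree $d$ yet is not a projective-plane incidence graph: there one has to exploit the mixed diagonal of $B'$ ($1$ on degree-$(d-1)$ vertices, $2$ on degree-$d$ vertices) together with refined Cauchy--Schwarz bounds between vertex classes to show that $B' \succ 0$ in fact forces $G$ to be a projective-plane incidence graph, thereby closing the forbidden interval $(\sqrt{d-2}, \sqrt{d-1})$ for $\min_\lambda |\lambda|$.
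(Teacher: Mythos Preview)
Your argument for the bound $\sqrt{d-1}$ and its equality characterization is correct and takes a route different from the paper's. The paper deduces the $d$-regular case from its normalized Laplacian machinery (Proposition~\ref{prop:regular-normalized}), which in turn rests on the neighborhood graph $\phi(G)$ and the Doob--Cvetkovi\'c classification of regular graphs with least eigenvalue greater than $-2$; it then handles non-regular graphs by a separate argument. Your direct analysis of the Gram structure of $B = \mathbf{A}^2 - (d-1)\mathbf{I}$, combined with the friendship theorem to eliminate $k=1$, is more self-contained for this half and bypasses $\phi(G)$ entirely. One minor correction: in the equality case with $\deg v = d-1$, the conclusion is not merely that $G$ is a subgraph of $K_d$ but specifically that $G$ is the star $K_{1,d-1}$, which has $0$ as an eigenvalue for $d\ge 3$ --- so your ``short case analysis'' does go through.

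There is, however, a genuine gap in the ``Furthermore'' clause --- the $\sqrt{d-2}$ bound for graphs that are not incidence graphs of projective planes. You correctly observe that passing to $B' = \mathbf{A}^2 - (d-2)\mathbf{I}$ loosens the structure, but your final paragraph only states what must be shown; it does not show it. Even in the $d$-regular case, once one extracts $4$-cycle freeness from $B' \succ 0$ one has $\mathbf{A}^2 = d\mathbf{I} + \mathbf{A}(\phi(G))$, so $B' \succ 0$ is exactly the condition $\lambda_{\min}(\mathbf{A}(\phi(G))) > -2$, and pinning down $\phi(G)$ from this is precisely the Doob--Cvetkovi\'c theorem the paper invokes in Lemma~\ref{lemma:k}. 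The non-regular case is harder still: the paper needs a further argument exploiting the line-graph structure of $\phi(G)$ for $d \ge 4$, and for $d = 3$ it appeals to the recent result of Acharya, Jeter, and Jiang on subcubic graphs. Your ``refined Cauchy--Schwarz bounds'' do not supply any of this, and there is no indication that a purely Gram-theoretic argument can replace these external classification results.
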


Theorem \ref{th:main} is a spectral gap reformulation of the significant related results \cite[Theorems 1, 3 and 4]{Mohar15} by Mohar and Tayfeh-Rezaie, where the distinctive feature of such reformulation is that we do not assume the \emph{bipartiteness}. When considering normalized Laplacian spectra instead of adjacency eigenvalues, we obtain the following spectral gap inequality, in which we work on \emph{gap from 1} rather than from 0, and the most fundamental difference lies in assuming the graph \emph{minimum} degree $\ge d$, rather than maximum degree $\le d$.

%More references on finite projective plane \cite{Veblen,Stinson,Hirschfeld,BBFT,Thompson} 
 %\cite{Hayden} eigenvalues of another incidence matrix

%Other useful formulations of Theorem \ref{th:main} and deeper results involving the normalized Laplacian are presented in Section \ref{sec:main}. 
\begin{theorem}\label{th:main2}
Given $d\ge 3$, for any connected graph $G$ with \textbf{minimum} degree $\ge d$, 
$$\min\limits_{\lambda\in\sigma(\mathbf{L})}|\lambda-1|\le \frac{\sqrt{d-1}}{d}$$
with equality if and only if $G$ is the incidence graph of a finite projective plane of order $d-1$. 
\end{theorem}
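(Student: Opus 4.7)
The plan is to pass to the symmetric normalization $\mathcal{A}=\mathbf{D}^{-1/2}\mathbf{A}\mathbf{D}^{-1/2}$, whose spectrum equals $\{1-\lambda:\lambda\in\sigma(\mathbf{L})\}$, so the target becomes $\min_{\mu\in\sigma(\mathcal{A})}|\mu|\le\sqrt{d-1}/d$. When $G$ is non-bipartite I first pass to its bipartite double cover $\tilde G$, which remains connected with the same minimum degree and satisfies $\sigma(\mathcal{A}(\tilde G))=\pm\sigma(\mathcal{A}(G))$, so the minimum-modulus eigenvalue is preserved. If the two parts of $\tilde G$ have unequal size, then $0\in\sigma(\mathcal{A}(\tilde G))$ and the strict inequality is immediate; otherwise the problem reduces to a bipartite graph on $N$ vertices.

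The main analytic input is the pointwise bound $(\mathcal{A}^2)_{ii}=d_i^{-1}\sum_{j\sim i}d_j^{-1}\le 1/d$, with equality iff every neighbor of $i$ has degree exactly $d$; summing yields $\mathrm{tr}(\mathcal{A}^2)\le N/d$, tight precisely when $G$ is $d$-regular. Combined with the always-present eigenvalues $\mu=1$ and (in the bipartite case) $\mu=-1$, this gives $c^2:=\min|\mu|^2\le(N-2d)/(d(N-2))$, which already proves the desired bound whenever $N\le 2(d^2-d+1)$. For the remaining large-$N$ regime: if $G$ is $d$-regular one has $\mathcal{A}=\mathbf{A}/d$ and the bound follows immediately from Theorem~\ref{th:main} applied to $\mathbf{A}$; if $G$ is not $d$-regular the trace inequality becomes strict, and combining this slack with the PSD condition $\mathcal{A}^2-c^2\mathbf{I}\succeq 0$ should force the estimate through.

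For the equality characterization, saturation of the trace bound forces $G$ to be $d$-regular with $\sigma(\mathcal{A})\subseteq\{\pm 1,\pm\sqrt{d-1}/d\}$. The non-bipartite possibility is excluded because $\mathrm{tr}(\mathcal{A})=0$ would require $d/\sqrt{d-1}\in\mathbb{Z}_{>0}$, which is impossible for $d\ge 3$. In the bipartite case the spectral condition reads $\mathbf{A}^2-(d-1)\mathbf{I}\succeq 0$; the same-side diagonal blocks of this matrix are PSD with diagonal $1$ and off-diagonal entries $|N(i)\cap N(j)|$, so by Cauchy--Schwarz each common-neighbor count lies in $\{0,1\}$. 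Counting length-two walks from any vertex then gives $d(d-1)\le|X|-1$, with equality iff every pair of vertices in the same part has exactly one common neighbor; this is the projective-plane axiom and, together with connectedness, identifies $G$ as the incidence graph of a projective plane of order $d-1$. The main obstacle I foresee is the non-$d$-regular large-$N$ step in the second paragraph, where the strict-inequality slack from non-regularity has to be converted into a quantitative $(d-1)/d^2$ bound on $c^2$ via a finer PSD analysis of $\mathcal{A}^2-(d-1)/d^2\cdot\mathbf{I}$ exploiting the degree inhomogeneity.
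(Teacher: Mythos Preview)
Your proposal has a genuine gap, and you correctly identify it yourself: the non-$d$-regular case with $N>2(d^2-d+1)$ is not handled. The trace argument cannot be rescued here. The bound $c^2\le (N-2d)/(d(N-2))$ tends to $1/d$ as $N\to\infty$, so for large $N$ it exceeds $(d-1)/d^2$ by roughly $1/d^2$. Meanwhile the ``slack'' you get from non-regularity can be arbitrarily small: if a single vertex has degree $d+1$ and all others have degree $d$, then $\mathrm{tr}(\mathcal{A}^2)$ drops below $N/d$ by only $O(1/d^2)$, independent of $N$. So the deficit is $O(1)$ while the excess you need to overcome is $\Theta(N/d^2)$; no amount of refining the PSD inequality $\mathcal{A}^2-c^2\mathbf{I}\succeq 0$ globally will bridge this. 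Your suggested ``finer PSD analysis of $\mathcal{A}^2-(d-1)/d^2\cdot\mathbf{I}$ exploiting the degree inhomogeneity'' would need to produce a negative principal minor, which is a local statement --- and that is exactly what the paper does, but via a completely different mechanism.

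The paper's reduction to the regular case (Lemma~\ref{lemma:reduce-to-regular}) abandons trace arguments entirely. It uses two-point test functions: for any $u,v$ with a common neighbor, plugging $f=\sqrt{\deg u}\,\mathbf{1}_u-\sqrt{\deg v}\,\mathbf{1}_v$ into the Rayleigh quotient for $(\mathbf{I}-\mathbf{D}^{1/2}\mathbf{L}\mathbf{D}^{-1/2})^2$ yields
\[
\sum_{w\in \mathcal{N}(u)\triangle \mathcal{N}(v)}\Big(\frac{1}{\deg w}-\frac{d-1}{d^2}\Big)\ \ge\ 2|\mathcal{N}(u)\cap\mathcal{N}(v)|\cdot\frac{d-1}{d^2}.
\]
Since $1/\deg w-(d-1)/d^2\le 1/d^2$ with equality only when $\deg w=d$, this forces most vertices of $\mathcal{N}(u)\triangle\mathcal{N}(v)$ to have degree in $\{d,d+1\}$, then most to have degree exactly $d$. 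The paper then chains these local constraints along a carefully chosen sequence of paths to propagate the condition $\deg w=d$ to every vertex. This is the substantive content of the theorem in the non-regular case, and nothing in your outline approximates it. Your equality analysis in the third paragraph is reasonable once regularity is in hand (and indeed overlaps with the paper's Lemmas~\ref{lemma:=d-no-4-cycle} and~\ref{lemma:d=2}), but it presupposes the missing step.
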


The proof of Theorem \ref{th:main2} is more challenging than that of Theorem \ref{th:main}, since induced subgraphs do not have interlacing properties for normalized Laplacian eigenvalues. Our approach combines a nonregular-to-regular reduction technique and the interplay between spectra of 4-cycle free graphs and neighborhood graphs.

In fact, we can relate the Laplacian eigenvalues of a graph $G=(V,E)$ and those of its neighborhood graphs. The Laplacian spectra are essentially equivalent to each other, and therefore, eigenvalue estimates for a neighborhood graph can be translated into eigenvalue estimates for the original graph, and vice versa. 
Since the neighborhood graphs $G^{[l]}$ of order $l$ introduced in \cite{Bauer13} encode properties of random walks on $G$, asymptotic ones if $l\to \infty$, we thereby gain a new source of geometric intuition for obtaining eigenvalue estimates. Recall that the $l$-th normalized Laplacian $\mathbf{L}^{[l]}$ on the neighborhood graph $G^{[l]}$ satisfies
 $ \mathbf{L}^{[l]}= \mathbf{I}-(\mathbf{I}-\mathbf{L})^{l}
  $. Then, as a consequence of Theorem \ref{th:main2}, we obtain:
  %for $f:V\to\mathbb{R}$. %For some eigenvalues, we can make the convergence in \eqref{nei7a} quantitative. 
\begin{theorem}\label{theonei1}
For every connected graph  $G$ with {\sl minimum} degree $d\ge 3$, there is some eigenvalue $\lambda^{[l]}$ of $\mathbf{L}^{[l]}$ with
  \begin{equation*}
    %\label{nei8}
  |1-  \lambda^{[l]}|\le \Big(\frac{\sqrt{d-1}}{d}\Big)^l.
\end{equation*}
When $l=2k$ is an even number, the largest eigenvalue $\lambda^{[2k]}_N$ of   $\mathbf{L}^{[2k]}$ satisfies
  \begin{equation*}
    %\label{nei9}
    1- \frac{(d-1)^k}{d^{2k}}\le \lambda^{[2k]}_N\le 1,
  \end{equation*}
 and both bounds are sharp.
\end{theorem}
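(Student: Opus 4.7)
The plan is to reduce Theorem \ref{theonei1} to Theorem \ref{th:main2} via the spectral-calculus identity $\mathbf{L}^{[l]} = \mathbf{I} - (\mathbf{I}-\mathbf{L})^l$. Since these two symmetric matrices are polynomials in one another, they share a common eigenbasis, and each eigenpair $(\lambda,v)$ of $\mathbf{L}$ yields an eigenpair $(1-(1-\lambda)^l,\,v)$ of $\mathbf{L}^{[l]}$; in particular, $|1-\lambda^{[l]}| = |1-\lambda|^l$ for corresponding eigenvalues.

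For the first inequality, I would invoke Theorem \ref{th:main2} (valid since $G$ has minimum degree $\ge d \ge 3$) to produce some $\lambda^\ast \in \sigma(\mathbf{L})$ with $|1-\lambda^\ast| \le \sqrt{d-1}/d$. The corresponding eigenvalue $\lambda^{[l]} := 1 - (1-\lambda^\ast)^l$ of $\mathbf{L}^{[l]}$ then satisfies $|1-\lambda^{[l]}| = |1-\lambda^\ast|^l \le (\sqrt{d-1}/d)^l$, as required.

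For the even case $l = 2k$, observe that $(1-\lambda)^{2k} \ge 0$ for every real $\lambda$, so each eigenvalue $\lambda^{[2k]} = 1 - (1-\lambda)^{2k}$ of $\mathbf{L}^{[2k]}$ satisfies $\lambda^{[2k]} \le 1$, which yields the upper bound. Maximizing over $\lambda \in \sigma(\mathbf{L})$ converts the minimum spectral gap from $1$ into a power:
$$\lambda^{[2k]}_N = 1 - \min_{\lambda \in \sigma(\mathbf{L})} |1-\lambda|^{2k} \ge 1 - \frac{(d-1)^k}{d^{2k}},$$
after a second application of Theorem \ref{th:main2}.

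Sharpness of the lower bound is inherited directly from the equality case of Theorem \ref{th:main2}: if $G$ is the incidence graph of a finite projective plane of order $d-1$, then the closest eigenvalue of $\mathbf{L}$ to $1$ is at distance exactly $\sqrt{d-1}/d$, so the associated $\lambda^{[2k]}_N$ hits the bound. Sharpness of the upper bound $\lambda^{[2k]}_N = 1$ holds whenever $1 \in \sigma(\mathbf{L})$; a convenient witness is the complete bipartite graph $K_{d,d}$, which has minimum degree $d$ and carries $1$ as a normalized Laplacian eigenvalue of high multiplicity. I do not foresee any genuine obstacle: all the depth sits inside Theorem \ref{th:main2}, and the only point requiring care is noting that $(1-\lambda)^{2k} \ge 0$ makes \emph{minimizing} $|1-\lambda|$ the correct route to \emph{maximizing} the image eigenvalue.
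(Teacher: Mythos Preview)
Your proposal is correct and follows exactly the route the paper indicates: the paper does not give a separate proof of Theorem \ref{theonei1} but simply states it as a consequence of Theorem \ref{th:main2} via the identity $\mathbf{L}^{[l]}=\mathbf{I}-(\mathbf{I}-\mathbf{L})^l$, which is precisely your reduction. One small inaccuracy: $\mathbf{L}=\mathbf{I}-\mathbf{D}^{-1}\mathbf{A}$ is not symmetric for non-regular graphs, so your phrase ``these two symmetric matrices'' should be replaced by the observation that $\mathbf{L}$ is similar to the symmetric matrix $\mathbf{I}-\mathbf{D}^{-1/2}\mathbf{A}\mathbf{D}^{-1/2}$ and hence diagonalizable --- the spectral mapping $\sigma(\mathbf{L}^{[l]})=\{1-(1-\lambda)^l:\lambda\in\sigma(\mathbf{L})\}$ then goes through unchanged.
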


Note that in Theorems \ref{th:main} and \ref{th:main2}, the incidence graphs of finite projective planes are the common extremal graphs for the corresponding spectral gaps. Since the girth of the incidence graph of a finite projective plane is 6, it would be interesting to consider graphs with larger girths. 
\begin{prop}
\label{thm:girth7}
For any $d$-regular graph with girth at least 7, %$g$, where $d\ge 3$ and $g\ge 7$, 
$$\min_{\lambda\in \sigma(\mathbf{A})}|\lambda|\le \sqrt{d-c_d} $$    
where $c_d$ is the positive root of the quadratic equation $t^2+(d-2)t-d(d-1)=0$.     
\end{prop}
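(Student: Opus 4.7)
The key reduction is to pass from $\mathbf{A}$ to the matrix $\mathbf{B} := \mathbf{A}^2 - d\mathbf{I}$, which for $d$-regular graphs of girth $\geq 5$ is precisely the $0/1$-adjacency matrix of the distance-$2$ graph $G_2$ of $G$: $\mathbf{B}_{uv}=1$ iff $d_G(u,v) = 2$. Since $\mathbf{A}$ and $\mathbf{B}$ share eigenvectors and $\lambda_i(\mathbf{B}) = \lambda_i(\mathbf{A})^2 - d$, the claim $\min_i |\lambda_i(\mathbf{A})|^2 \leq d - c_d$ is equivalent to $\lambda_{\min}(\mathbf{B}) \leq -c_d$. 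My plan is to prove the latter via Cauchy interlacing applied to a carefully chosen principal submatrix of $\mathbf{B}$.

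Fix any vertex $u$ and set $S := \{u\} \cup N^2(u)$, where $N^2(u)$ denotes the set of vertices at $G$-distance exactly $2$ from $u$, so that $|S| = 1 + d(d-1)$. The girth hypothesis is used to identify $G_2[S]$ completely: the apex $u$ is $G_2$-adjacent to every vertex of $N^2(u)$; two vertices $v, v' \in N^2(u)$ sharing a common parent $a \in N(u)$ are $G_2$-adjacent, since $d_G(v,v') = 2$ via $a$; and two vertices of $N^2(u)$ with distinct parents $a \neq a'$ cannot share any $G$-neighbor $z$, because the closed walk $u$-$a$-$v$-$z$-$v'$-$a'$-$u$ would yield a $6$-cycle, contradicting $\mathrm{girth}(G) \geq 7$. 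Consequently $G_2[S]$ is isomorphic to the graph join $K_1 * (d \cdot K_{d-1})$: the apex $u$ is connected to all $d(d-1)$ other vertices, which split into $d$ disjoint cliques $K_{d-1}$ indexed by their parent in $N(u)$.

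A short block-symmetry computation determines the spectrum of $\mathbf{B}[S]$. Vectors supported on $N^2(u)$ that are constant on each $K_{d-1}$ but sum to zero across blocks yield the eigenvalue $d-2$ with multiplicity $d-1$; vectors orthogonal to $\mathbf{1}$ within each $K_{d-1}$ yield the eigenvalue $-1$ with multiplicity $d(d-2)$; and the restriction of $\mathbf{B}[S]$ to the $2$-dimensional invariant subspace spanned by $e_u$ and $\mathbf{1}_{N^2(u)}$ is a $2\times 2$ operator whose characteristic polynomial is $\mu^2 - (d-2)\mu - d(d-1) = 0$. Substituting $\mu = -t$ turns this into $t^2 + (d-2)t - d(d-1) = 0$, whose positive root is $c_d$ by definition; hence one of the two ``radial'' eigenvalues equals $-c_d$. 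A direct check shows $c_d > 1$ for $d \geq 3$, whence $-c_d$ is the smallest eigenvalue of $\mathbf{B}[S]$, and Cauchy interlacing gives $\lambda_{\min}(\mathbf{B}) \leq \lambda_{\min}(\mathbf{B}[S]) = -c_d$, equivalent to $\min_i |\lambda_i(\mathbf{A})| \leq \sqrt{d-c_d}$. The only delicate step is the identification of $G_2[S]$: the assumption $\mathrm{girth}(G) \geq 7$ is used precisely to forbid the $6$-cycle obstruction above; a weaker girth would permit additional $G_2$-edges between non-sibling second-neighbors and pollute the spectrum of the submatrix.
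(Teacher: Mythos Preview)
Your proof is correct and follows essentially the same approach as the paper: both pass to $\mathbf{A}^2-d\mathbf{I}$ (the adjacency matrix of the paper's $\phi(G)$), use the girth-$7$ hypothesis to recognise that the subgraph induced on $\{u\}\cup N^2(u)$ is the windmill $K_1*(d\cdot K_{d-1})$, and extract the eigenvalue $-c_d$ from this local structure. The only cosmetic difference is that the paper plugs the bottom eigenvector of this windmill directly into the Rayleigh quotient $\mathcal{R}_{\mathbf{A}^2}$ as a test function, whereas you compute the full spectrum of the principal submatrix and invoke Cauchy interlacing; these are two phrasings of the same variational bound.
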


Further useful formulations of Theorems \ref{th:main} and \ref{th:main2} and detailed results involving the normalized Laplacian are presented in Sections \ref{sec:main} and \ref{section:discuss}. 
Our results have fit into a larger picture: they have connections both with finite projective planes from combinatorial designs \cite{Hirschfeld,Stinson}, and with spectral extremal graph problems \cite{Brualdi,Nikiforov,Wilf}. Precisely, our results are spectral gap analogous to Mohar and Tayfeh-Rezaie's bounds on the HL-index \cite{Mohar16}, as well as van Dam, Haemers and Koolen's estimate of the energy per vertex \cite{vanDam}.  Moreover, our results are related to gap intervals %expander graphs
and random walks on graphs, including %Alon-Boppana's theorem on Ramanujan graphs \cite{Chiu92,Nilli91}, 
Koll\'ar-Sarnak's theorem on the maximal gap interval for cubic graphs \cite{Kollar20}, as well as Bauer-Jost's Laplacian on neighborhood  graphs \cite{Bauer13}. 
We will explain these relations in Section \ref{sec:main} and Section \ref{section:discuss}.

\section{Preliminary and Main results}\label{sec:main}
Throughout the paper we fix a connected, finite, simple graph $G=(V,E)$ on $N\geq 3$ vertices. %We let $C(V)$ denote the space of functions $f:V\rightarrow \mathbb{R}$ and, g
Given a vertex $v$, we let $\mathcal{N}(v):=\{w\in V:\{w,v\}\in E\}$ denote the neighborhood of $v$, i.e., the set of other vertices $w\sim v$ connected to $v$ by an edge. For convenience, we use the terminology $\mathcal{G}$ to express the set of all connected graphs with at least 3 nodes. Given $d\ge 2$, we use the following notions \[\mathcal{G}_{\ge d}=\{G\in \mathcal{G}:\deg(v)\ge d,\forall v\in V(G)\},\]
\[\mathcal{G}_{= d}=\{G\in \mathcal{G}:\deg(v)= d,\forall v\in V(G)\},\]
and
\[\mathcal{G}_{\le d}=\{G\in \mathcal{G}:\deg(v)\le d,\forall v\in V(G)\},\]
for the collections of connected graphs with minimum degree $\ge d$, connected $d$-regular graphs, and connected graphs with maximum degree $\le d$, respectively. 
%We let $\lambda_1=0<\lambda_2\leq\ldots\leq\lambda_N$ denote the eigenvalues of the Laplacian in \eqref{eq:defL}.  
In this section, we present a detailed review of concepts and results related to Theorems \ref{th:main} and \ref{th:main2}. We begin by introducing the incidence graphs of finite projective planes and the normalized Laplacian separately. %in a separate but subsequent way. 

\subsection{Finite projective planes and their incidence graphs}\label{sec:projective}

A finite projective plane is an incidence structure $(P,L,I)$ which consists of a finite set of points $P$, a finite set of lines $L$, and an incidence relation $I$ between the points and the lines that satisfy the following conditions:
\begin{enumerate}[({P}1)]
\item  Every two points are incident with a unique line.
\item Every two lines are incident with a unique point.
\item There are four points, no three collinear.
\end{enumerate}
A projective plane of \emph{order $n$} is a finite projective plane that has at least one line with exactly $n+1$ distinct points incident with it, where $n\ge 2$.

For what values of $n$ does a projective plane of order $n$ exist? 
This is a very fundamental question on finite projective planes. Veblen and Bussey proved that a finite projective plane exists when the order $n$ is a power of a prime, and they conjectured that these are the only possible projective planes \cite{Veblen}. This is one of the most important unsolved problems in combinatorics and some remarkable progresses are made by Bruck and Ryser \cite{BruckRyser}, and Lam \cite{Lam89}.

There are some extremal graph problems whose extremal graphs are the polarity graphs of finite projective planes \cite{Brown,ERS66,Furedi,Furedi13}, and the incidence graphs of finite projective planes \cite{DeWinter,Fiorini,Mohar,Mohar16,vanDam}. Since this paper focuses on incidence graphs, we recall the definition as follows.

\begin{definition}
The \emph{incidence graph} of a finite projective plane $(P,L,I)$ is a bipartite graph with bipartition $P$ and $L$, in which $p\in P$ and $l\in L$  are adjacent if and only if $p\in l$.  
\end{definition}

For example, the incidence graph of a finite projective plane of order $2$ is unique up to graph isomorphism, which is called the Heawood
 graph (see Figure \ref{fig:Heawood}).

\begin{prop}[\cite{Godsil}]
The eigenvalues of an incidence graph of a finite projective plane of order $n$ are $\pm (n+1)$, $\pm \sqrt{n}$, where the multiplicity of $\sqrt{n}$ (resp., $-\sqrt{n}$) is $n^2+n$. % (c.f. \cite{Godsil}).
\end{prop}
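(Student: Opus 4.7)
The plan is to exploit the bipartite structure of the incidence graph and reduce everything to computing the spectrum of a very simple matrix of the form $nI+J$. First I would write the adjacency matrix in block form
\[
\mathbf{A}=\begin{pmatrix} 0 & N \\ N^{T} & 0 \end{pmatrix},
\]
where $N$ is the $m\times m$ point-line incidence matrix with $m=n^{2}+n+1$ (the standard count for a projective plane of order $n$: each line contains $n+1$ points, each point lies on $n+1$ lines, and the total number of points and of lines is $n^{2}+n+1$). Because $G$ is bipartite with equal parts, the spectrum of $\mathbf{A}$ is symmetric about $0$ and is determined by the spectrum of $NN^{T}$ (equivalently $N^{T}N$) via the identity $\sigma(\mathbf{A})^{2}=\sigma(NN^{T})\cup\sigma(N^{T}N)$, with multiplicities doubled.

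Next I would use the three projective plane axioms to identify $NN^{T}$ explicitly. The diagonal entry $(NN^{T})_{pp}$ counts the number of lines through the point $p$, which equals $n+1$ (this follows from the axioms together with the order condition, and is a standard lemma from incidence geometry). The off-diagonal entry $(NN^{T})_{pq}$ for $p\neq q$ counts the number of lines through both $p$ and $q$, which by axiom (P1) equals exactly $1$. Therefore
\[
NN^{T}=nI+J,
\]
where $J$ is the $m\times m$ all-ones matrix. By a symmetric argument using axiom (P2), also $N^{T}N=nI+J$.

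The spectrum of $nI+J$ is elementary: $J$ has eigenvalues $m$ (on the span of the all-ones vector $\mathbf{1}$) and $0$ (with multiplicity $m-1$ on $\mathbf{1}^{\perp}$), so $nI+J$ has eigenvalues $n+m=(n+1)^{2}$ with multiplicity $1$, and $n$ with multiplicity $m-1=n^{2}+n$. Consequently $\mathbf{A}^{2}$ has eigenvalues $(n+1)^{2}$ with multiplicity $2$ and $n$ with multiplicity $2(n^{2}+n)$.

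Finally I would use bipartiteness to take square roots consistently: for a bipartite graph the spectrum of $\mathbf{A}$ is symmetric, so each eigenvalue $\mu\ge 0$ of $\mathbf{A}^{2}$ with multiplicity $2k$ contributes $+\sqrt{\mu}$ and $-\sqrt{\mu}$ to $\sigma(\mathbf{A})$ with multiplicity $k$ each (and multiplicity $1$ each when $\mu=(n+1)^{2}$, since the Perron eigenvector on $\mathbf{1}_{P}+\mathbf{1}_{L}$ and its signed partner $\mathbf{1}_{P}-\mathbf{1}_{L}$ are explicit eigenvectors of $\mathbf{A}$ for $\pm(n+1)$). This yields $\sigma(\mathbf{A})=\{\pm(n+1)\}\cup\{\pm\sqrt{n}\}$ with the claimed multiplicities. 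The only real subtlety is the bookkeeping showing that the halving of the multiplicity from $\mathbf{A}^{2}$ to each sign of $\mathbf{A}$ is correct; this I would justify either by matching dimensions on the two sides of the bipartition or, more cleanly, by noting that if $Nv=\sigma u$ with $\sigma>0$ then $(u,v)$ and $(u,-v)$ are eigenvectors of $\mathbf{A}$ for $\pm\sigma$, giving a two-to-one correspondence between singular pairs of $N$ and signed eigenvector pairs of $\mathbf{A}$.
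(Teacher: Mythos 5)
Your proof is correct, and it is the standard argument: the paper states this proposition with only a citation to Godsil--Royle and gives no proof of its own, and the textbook proof it defers to is exactly your computation of $NN^{T}=nI+J$ from the plane axioms followed by the bipartite/singular-pair bookkeeping. All the details check out (in particular $n+m=(n+1)^{2}$, $m-1=n^{2}+n$, and the symmetric splitting of multiplicities for the nonzero eigenvalues of $\mathbf{A}^{2}$), so there is nothing to add.
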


%There have been some relevant results in literature. 

\begin{figure}[h]
    \centering
\begin{tikzpicture}[scale=1.6%, every node/.style={draw,circle,fill=white,inner sep=1.5pt}
]

\foreach \i in {0,2,4,6,8,10,12} {
    \node (v\i) at ({\i*360/14}:1cm) {$\circ$};
}

\foreach \i in {1,3,5,7,9,11,13} {
    \node (v\i) at ({\i*360/14}:1cm) {$\bullet$};
}

\foreach \i in {0,2,4,6,8,10,12} {
    \pgfmathsetmacro{\j}{mod(\i+5,14)}
    \draw ({\i*360/14}:0.97cm) -- ({\j*360/14}:0.97cm);
}

\foreach \i in {0,...,13} {
    \pgfmathsetmacro{\j}{mod(\i+1,14)}
    \draw ({\i*360/14}:0.97cm) -- ({\j*360/14}:0.97cm);
}

\end{tikzpicture}
    \caption{The Heawood graph}
    \label{fig:Heawood}
\end{figure}
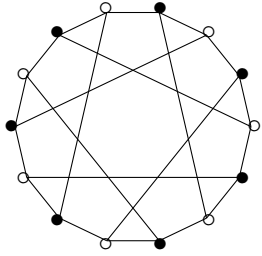

%Fiorini and Lazebnik \cite{Fiorini} proved that for any 4-cycle free, \emph{bipartite} graph $G$ on $2m$ vertices with partitions of equal cardinality $m$, there holds$$c_6(G)\le \frac13{m\choose 2}\big(m-\frac{1+\sqrt{4m-3}}{2}\big)$$ with equality if and only if $G$ is the incidence graph of a projective plane, where $c_6(G)$ denotes the number of cycles of length 6 in $G$. 

With the help of the incidence graphs of finite projective planes, 
Mohar \cite{Mohar16} establish the inequality $$\sup\limits_{G\in \mathcal{G}_{\le d}} R(G)\ge \sqrt{d-1}$$ where $R(G)$ indicates the HL-index of $G$. And for any \emph{bipartite} graph $G$ in $\mathcal{G}_{\le d}$, %among all graphs with maximum degree $d$. 
Mohar and Tayfeh-Rezaie further prove that if $R(G)> \sqrt{d-2}$ then $R(G)= \sqrt{d-1}$ and $G$ is the incidence graph of a projective plane of order $d-1$ (c.f. \cite[Theorem 3]{Mohar15}). To some extent, Theorem \ref{th:main} is an extension of this result. %by Mohar and Tayfeh-Rezaie.
    
Coincidentally, van Dam, Haemers and Koolen \cite{vanDam} show that the energy per vertex of a $d$-regular graph is at most $$\frac{d+(d^2-d)\sqrt{d-1}}{d^2-d+1}$$
with equality if and only if the graph is the disjoint union of incidence graphs of projective planes of order $d-1$, or, in case $d=2$, the disjoint union of triangles and hexagons. 

%These results 
Using the spectral gap from 0 instead of the HL-index and the energy per vertex, Theorem \ref{th:main} can be viewed as a spectral gap analogue of the results of Mohar \cite{Mohar16}, Mohar and Tayfeh-Rezaie \cite{Mohar15}, as well as van Dam, Haemers and Koolen \cite{vanDam}. We also point out in Remark \ref{remark:th1-equivalent} that Theorem \ref{th:main} is essentially a spectral gap reformulation of the result of Mohar and Tayfeh-Rezaie \cite{Mohar15}. 

Another spectral gap property regarding finite projective planes 
%To extend Theorem \ref{th:main} a bit, we will give another version using normalized Laplacian which 
will be presented in Section \ref{sec:Laplacian}, in which we essentially use the normalized adjacency matrix $\mathbf{D}^{-1}\mathbf{A}$ rather than the adjacency matrix $\mathbf{A}$, but we would formulate the results in  terms of normalized Laplacian to fit the large picture on Laplacian spectral gap. 
\subsection{Normalized Laplacian and main results}\label{sec:Laplacian}
The normalized Laplacian $\mathbf{L}$ acting on a function $f:V\to \mathbb{R}$ is defined by
\begin{equation}\label{eq:defL}
    \mathbf{L} f(v)=f(v)-\frac{1}{\deg v} \sum_{w\sim v}f(w),
\end{equation}
that is, we subtract from the value of $f$ at $v$ the average of the values at its neighbors. This operator generates random walks and diffusion processes on graphs, and it was first systematically studied in \cite{Chung97}. The basic equality $\mathbf{L}+\mathbf{D}^{-1}\mathbf{A}=\mathbf{I}$ establishes the connection between the normalized Laplacian $\mathbf{L}$ and the normalized adjacency matrix $\mathbf{D}^{-1}\mathbf{A}$, the later of which is commonly used in graph convolutional neural networks \cite{Kipf}. Due to the simple relationship between the spectra of the two matrices, it suffices to work with one of them. We prefer to use the normalized Laplacian $\mathbf{L}$ because it has both geometric and combinatorial meanings. 

Denote by $\sigma(\mathbf{L})$ the spectrum of $\mathbf{L}$, and by $$\mathrm{gap}(G):=\min\limits_{\lambda\in\sigma(\mathbf{L})}|\lambda-1|$$
the spectral gap from 1.  
Given a subfamily $\mathcal{G}'\subset \mathcal{G}$, we use the notion
$$\textbf{gap}(\mathcal{G}'):=\sup_{G\in \mathcal{G}'}\mathrm{gap}(G)=\sup_{G\in \mathcal{G}'}\min_{\lambda\in\sigma(\mathbf{L})}|\lambda-1|.$$
Note that we always assume that $\mathcal{G}'$ is an infinite set. Denote by 
$$ \mathbf{Extreme}(\mathcal{G}')=\{G\in \mathcal{G}':\mathrm{gap}(G)=\textbf{gap}(\mathcal{G}')\} $$
the set of extremal graphs in $\mathcal{G}'$ for the Laplacian spectral gap from 1.  
We are in a position to present the extremal graph theory on the spectral gap, which is more subtle than Theorem \ref{th:main}. 
\begin{theorem}\label{thm:d-graph-projective}
Given $d\ge 3$, we have the following: 
\begin{itemize}
\item If there exists a finite projective plane of order $d-1$, then $$\mathbf{gap}(\mathcal{G}_{=d})=\mathbf{gap}(\mathcal{G}_{\ge d})=\frac{\sqrt{d-1}}{d}$$ 
and %a graph $G$ in $\mathcal{G}_{=d}$ satisfies $\mathrm{gap}(G)=\frac{\sqrt{d-1}}{d}$  if and only if $G$ is the incidence graph of a finite projective plane of order $d-1$.
$\mathbf{Extreme}(\mathcal{G}_{\ge d})=\mathbf{Extreme}(\mathcal{G}_{=d})%=\{\text{incidence graphs of }PG(2,d-1)\}
$ is the set of incidence graphs of projective planes of order $d-1$. 
\item For any $G\in\mathcal{G}_{=d}$ other than incidence graphs of projective planes of order $d-1$, we have $$\mathrm{gap}(G)\le\frac{\sqrt{d-2}}{d}.$$

If we further assume that there exists \textbf{no} finite projective plane of order $d-1$, then for any $G\in\mathcal{G}_{\ge d}$, we have $\mathrm{gap}(G)<\frac{\sqrt{d-1}}{d}$ and 
$$\mathbf{gap}(\mathcal{G}_{=d})\le\frac{\sqrt{d-2}}{d}.$$
\end{itemize}
\end{theorem}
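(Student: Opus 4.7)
The plan is to deduce Theorem~\ref{thm:d-graph-projective} by combining the adjacency-matrix result Theorem~\ref{th:main} (specialized to $d$-regular graphs) with the normalized-Laplacian result Theorem~\ref{th:main2}. The bridge is that any $G\in\mathcal{G}_{=d}$ satisfies $\mathbf{L}=\mathbf{I}-\mathbf{A}/d$, so $\sigma(\mathbf{L})=\{1-\lambda/d:\lambda\in\sigma(\mathbf{A})\}$ and
\[
\mathrm{gap}(G)\;=\;\frac{1}{d}\min_{\lambda\in\sigma(\mathbf{A})}|\lambda|.
\]
This identity, which fails on $\mathcal{G}_{\ge d}\setminus\mathcal{G}_{=d}$, lets me import Theorem~\ref{th:main}'s two adjacency bounds $\sqrt{d-1}$ and $\sqrt{d-2}$ verbatim as normalized-Laplacian gap bounds $\sqrt{d-1}/d$ and $\sqrt{d-2}/d$ on the $d$-regular subfamily.

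For the first bullet, suppose a projective plane of order $d-1$ exists. Its incidence graph $G_\Pi$ is $d$-regular and, by the proposition of Godsil recalled above, has adjacency spectrum $\{\pm d,\pm\sqrt{d-1}\}$, so $\mathrm{gap}(G_\Pi)=\sqrt{d-1}/d$. Paired with the universal upper bound $\mathrm{gap}(G)\le\sqrt{d-1}/d$ from Theorem~\ref{th:main2} (which applies to $\mathcal{G}_{\ge d}$ and hence also to $\mathcal{G}_{=d}$), this yields $\mathbf{gap}(\mathcal{G}_{=d})=\mathbf{gap}(\mathcal{G}_{\ge d})=\sqrt{d-1}/d$. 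The equality clause of Theorem~\ref{th:main2} identifies $\mathbf{Extreme}(\mathcal{G}_{\ge d})$ with the set of incidence graphs of projective planes of order $d-1$; since these graphs are $d$-regular, they also lie in $\mathcal{G}_{=d}$ and thus coincide with $\mathbf{Extreme}(\mathcal{G}_{=d})$.

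For the second bullet, first take $G\in\mathcal{G}_{=d}$ that is not an incidence graph of a projective plane of order $d-1$. The refined conclusion of Theorem~\ref{th:main} gives $\min_{\lambda\in\sigma(\mathbf{A})}|\lambda|\le\sqrt{d-2}$, and the regularity identity then yields $\mathrm{gap}(G)\le\sqrt{d-2}/d$. Now assume in addition that no projective plane of order $d-1$ exists. Then the equality case of Theorem~\ref{th:main2} is never attained, so its inequality is strict: $\mathrm{gap}(G)<\sqrt{d-1}/d$ for every $G\in\mathcal{G}_{\ge d}$. Within $\mathcal{G}_{=d}$ every graph vacuously satisfies the ``not an incidence graph'' hypothesis of the previous step, so the refined bound applies universally and gives $\mathbf{gap}(\mathcal{G}_{=d})\le\sqrt{d-2}/d$.

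Because the analytic substance is contained in Theorems~\ref{th:main} and~\ref{th:main2}, the proof here is essentially organizational. The only delicate step I foresee is the transfer of the equality characterization from the larger family $\mathcal{G}_{\ge d}$ down to $\mathcal{G}_{=d}$: one must explicitly note that all projective-plane incidence graphs are $d$-regular, so that $\mathbf{Extreme}(\mathcal{G}_{=d})=\mathbf{Extreme}(\mathcal{G}_{\ge d})$ follows without any additional argument.
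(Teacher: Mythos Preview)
Your deduction is formally correct: if Theorems~\ref{th:main} and~\ref{th:main2} are granted as black boxes, Theorem~\ref{thm:d-graph-projective} follows exactly as you outline, and the only point requiring care---that the extremal graphs for $\mathcal{G}_{\ge d}$ are $d$-regular, hence already in $\mathcal{G}_{=d}$---you handle correctly. The issue is circularity. In the paper the logical flow runs in the \emph{opposite} direction: Section~3.1 is titled ``Proof of Theorem~\ref{thm:d-graph-projective} and Theorem~\ref{th:main2}'' and establishes both simultaneously through a chain of direct lemmas; Section~3.2 then deduces the $d$-regular case of Theorem~\ref{th:main} from that work (via Proposition~\ref{prop:regular-normalized}, which is precisely the summary of the Section~3.1 argument). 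So the two inputs you invoke are, within the paper, \emph{consequences} of the statement you are trying to prove. For Theorem~\ref{th:main} one can escape this by appealing externally to Mohar--Tayfeh-Rezaie through bipartite double covers (Remark~\ref{remark:th1-equivalent}), but Theorem~\ref{th:main2} is essentially the first bullet of Theorem~\ref{thm:d-graph-projective} restated, and the paper offers no proof of it that does not pass through Theorem~\ref{thm:d-graph-projective}.

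The paper's own argument is therefore substantively different. It has two phases: first a nonregular-to-regular reduction (Lemma~\ref{lemma:reduce-to-regular}), using carefully chosen two-point test functions to show that any $G\in\mathcal{G}_{\ge d}$ with $\mathrm{gap}(G)\ge\sqrt{d-1}/d$ must already be $d$-regular; second, for $d$-regular $G$, the identity $\mathrm{gap}(G)^2=(d+\lambda_{\min}(\mathbf{A}(\phi(G))))/d^2$ relating the gap to the least adjacency eigenvalue of the unweighted neighborhood graph $\phi(G)$, so that $\mathrm{gap}(G)>\sqrt{d-2}/d$ forces $\lambda_{\min}(\mathbf{A}(\phi(G)))>-2$. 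The Doob--Cvetkovi\'c classification of connected regular graphs with least eigenvalue greater than $-2$ then pins $\phi(G)$ down as a disjoint union of at most two complete graphs $K_{d^2-d+1}$, from which the projective-plane structure is read off directly. Your route trades all of this for a one-line repackaging; that is a useful observation about how the three theorems relate, but it does not supply the analytic content and, as written, is circular in the paper's framework.
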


This result can also be viewed as a constrained version of the main theorem in \cite{JGT23} with additional minimum degree constraint. For example, $\mathbf{Extreme}(\mathcal{G}_{\ge 3})=\{\text{Heawood
 graph}\}$ (see Figure \ref{fig:Heawood}). It is interesting to notice that combining with the main result in \cite{JGT23}, the equality $\mathbf{Extreme}(\mathcal{G}_{\ge d})=\mathbf{Extreme}(\mathcal{G}_{=d})$ does not hold for $d=2$, as we have $\mathbf{Extreme}(\mathcal{G}_{= 2})\subsetneqq \mathbf{Extreme}(\mathcal{G}_{\ge 2})$ by the following result:

\begin{theorem}\label{thm:d=2}
If $d=2$, then $\mathbf{gap}(\mathcal{G}_{=2})=\mathbf{gap}(\mathcal{G}_{\ge 2})= \frac12$,  $\mathbf{Extreme}(\mathcal{G}_{=2})=\{\text{triangle,  hexagon}\}$, and $ \mathbf{Extreme}(\mathcal{G}_{\ge2})$ is the set of friendship graphs and book graphs (see Figure~\ref{fig:petal-book}).   
%a graph $G$ in $\mathcal{G}_{=2}$ satisfies $\mathrm{gap}(G)=\frac12$ if and only if $G$ is a triangle or a  hexagon. %$K_3$ or $C_6$.
\end{theorem}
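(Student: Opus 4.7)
The approach is to combine the classification of $\mathbf{Extreme}(\mathcal{G})$ provided by the main theorem of \cite{JGT23} with a spectral analysis of the natural candidates in $\mathcal{G}_{=2}$ and $\mathcal{G}_{\ge 2}$. Since $\mathcal{G}_{=2}\subset \mathcal{G}_{\ge 2}\subset\mathcal{G}$ and $\mathbf{gap}(\mathcal{G})=\frac{1}{2}$ by \cite{JGT23}, the upper bound $\frac{1}{2}$ is inherited, and the remaining task is to determine which members of $\mathbf{Extreme}(\mathcal{G})$ survive the minimum-degree restriction.

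For $\mathcal{G}_{=2}$ the graphs are precisely the cycles $C_n$ with $n\ge 3$, and the well-known spectrum $\sigma(\mathbf{L}(C_n))=\{1-\cos(2\pi k/n):k=0,1,\dots,n-1\}$ yields $\mathrm{gap}(C_n)=\min_k|\cos(2\pi k/n)|$. Using that $|\cos\theta|<\frac{1}{2}$ iff $\theta\bmod\pi\in(\pi/3,2\pi/3)$, the inequality $\mathrm{gap}(C_n)\ge \frac{1}{2}$ is equivalent to no fraction $k/n$ with $1\le k\le n-1$ lying in $(1/6,1/3)\cup(2/3,5/6)$, which is easily seen to hold exactly for $n=3$ and $n=6$. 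This gives $\mathbf{Extreme}(\mathcal{G}_{=2})=\{\text{triangle},\text{hexagon}\}$ and $\mathbf{gap}(\mathcal{G}_{=2})=\frac{1}{2}$.

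For $\mathcal{G}_{\ge 2}$ the lower bound is witnessed by explicit spectra. The friendship graph $F_k$ (a central vertex of degree $2k$ joined to $k$ pendant triangles) and the book graph $B_n$ (two vertices linked by $n$ internally disjoint paths of length $3$) both carry rich symmetries: decomposing the test-function space under the page-permutation action and the obvious reflection reduces the eigenvalue problem to small $2\times 2$ systems, giving eigenvalues of $\mathbf{L}(F_k)$ equal to $0$, $1/2$ (multiplicity $k-1$), and $3/2$ (multiplicity $k+1$), and eigenvalues of $\mathbf{L}(B_n)$ equal to $0$, $1/2$ and $3/2$ (each with multiplicity $n$), and $2$. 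Both spectra yield $\mathrm{gap}=\frac{1}{2}$ and both families lie in $\mathcal{G}_{\ge 2}$, so $\mathbf{gap}(\mathcal{G}_{\ge 2})=\frac{1}{2}$ and these graphs belong to $\mathbf{Extreme}(\mathcal{G}_{\ge 2})$. Note that this construction already recovers the triangle as $F_1$ and the hexagon as $B_2$, consistent with the strict inclusion $\mathbf{Extreme}(\mathcal{G}_{=2})\subsetneqq \mathbf{Extreme}(\mathcal{G}_{\ge 2})$.

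The remaining, and main, step is the converse inclusion: any extremizer in $\mathcal{G}_{\ge 2}$ must be a friendship or book graph. Here I would invoke the explicit description of $\mathbf{Extreme}(\mathcal{G})$ from \cite{JGT23} and discard those families incompatible with the constraint that the minimum degree is at least $2$. The chief obstacle is verifying that every other family appearing in the \cite{JGT23} classification contains a pendant vertex (equivalently, a vertex of degree $1$) and is therefore excluded; this is a finite but somewhat delicate case check against the full extremal list of \cite{JGT23}, and it is the only non-routine ingredient of the argument.
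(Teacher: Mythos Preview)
Your approach is correct and essentially coincides with the paper's: the paper does not supply an independent proof of Theorem~\ref{thm:d=2}, but treats it as a direct consequence of the main theorem of \cite{JGT23} (this is the meaning of the clause ``combining with the main result in \cite{JGT23}'' immediately preceding the theorem statement; no proof appears in Section~\ref{section:proof}).

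Your explicit computations---the cycle spectrum argument for $\mathcal{G}_{=2}$ and the determination of the normalized Laplacian eigenvalues of $F_k$ and $B_n$---are correct and go beyond what the paper writes out. Your one stated uncertainty, however, is unnecessary: the main theorem of \cite{JGT23} already asserts that $\mathbf{Extreme}(\mathcal{G})$ consists \emph{precisely} of the friendship (petal) graphs and the book graphs, so there is no residual ``delicate case check'' to perform. All friendship graphs $F_k$ with $k\ge 1$ have minimum degree $2$, and all book graphs $B_n$ with $n\ge 2$ have minimum degree $2$; the only member with a pendant vertex is the degenerate book $B_1=P_4$, which is discarded. This immediately yields $\mathbf{Extreme}(\mathcal{G}_{\ge 2})=\{F_k:k\ge 1\}\cup\{B_n:n\ge 2\}$, with $F_1$ the triangle and $B_2$ the hexagon recovering $\mathbf{Extreme}(\mathcal{G}_{=2})$ as you noted.
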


Theorems \ref{thm:d-graph-projective} and \ref{thm:d=2} indicate that case $d=2$ and case $d\ge 3$ have a very fundamental distinction. Also note that in some relevant results in \cite{Mohar16,Mohar15}, the \emph{interlacing property} is frequently used, %for adjacency eigenvalues
and the \emph{bipartiteness} is required due to their approaches. However, unlike the adjacency matrix, the normalized Laplacian does not have such eigenvalue interlacing for induced subgraphs. 
To understand the difference of Theorems \ref{thm:d-graph-projective} and \ref{thm:d=2}, and to overcome the difficulties arising from the absence of bipartiteness and the interlacing property, we shall outline the proof. In fact, the proof contains two new strategies: 
\begin{itemize}
\item[\bf Phase 1.] Nonregular-to-regular reduction lemma: we use ideas from variational analysis and optimization to reduce the extremal graphs in the nonregular case to the regular case (see Lemma \ref{lemma:reduce-to-regular}). 
\item[\bf Phase 2.] Spectral interactions between 4-cycle free graphs and neighborhood graphs: we reveal a hidden relation between the normalized Laplacian of a regular 4-cycle free graph and adjacency matrix of its neighborhood graph, and we propose a deep study on the extremal graphs of the least adjacency eigenvalue of neighborhood graphs (see the proofs of Lemmas \ref{lemma:k} and \ref{lemma:phi(G)}).
    
\end{itemize}
We derive the proof by synthesizing these two strategies. %We combine all the strategies to derive the proof. 
Some of the lemmas have their own interests.

\begin{figure}[h]
    \centering
\begin{tabular}{ccc}
 \begin{tikzpicture}[scale=1.9]
 \draw (0,0) -- (30:1)-- (60:1)--(0,0)--(0:1)--(-30:1)--(0,0)--(0:1)--(0,0)--(-60:1)--(270:1)--(0,0)--(240:1)--(210:1)--(0,0)--(180:1)--(150:1)--(0,0);%--(120:1)--(90:1)--(0,0);
 \node (0) at (0,0) {$\bullet$};
  \node (0) at (30:1) {$\bullet$};
   \node (0) at (60:1) {$\bullet$};
  \node (0) at (-30:1) {$\bullet$};
   \node (0) at (0:1) {$\bullet$};
   \node (0) at (-60:1) {$\bullet$};
  \node (0) at (270:1) {$\bullet$};
   \node (0) at (240:1) {$\bullet$};
  \node (0) at (210:1) {$\bullet$};
    \node (0) at (180:1) {$\bullet$};
  \node (0) at (150:1) {$\bullet$};
 \node (1) at (110:0.8) {$\cdot$};
  \node (1) at (100:0.8) {$\cdot$};
    \node (1) at (120:0.8) {$\cdot$};
 \end{tikzpicture}&~&  \begin{tikzpicture}[scale=1.2]
 \draw (0,0)--(2,1)--(2,2)--(0,3)--(1.6,2)--(1.6,1)--(0,0)--(1.2,1)--(1.2,2)--(0,3)--(0.8,2)--(0.8,1)--(0,0);
 \draw (0,0)--(-2,1)--(-2,2)--(0,3)--(-1.6,2)--(-1.6,1)--(0,0)--(-1.2,1)--(-1.2,2)--(0,3);%--(-0.8,2)--(-0.8,1)--(0,0);
  \node (0) at (0,0) {$\bullet$};
  \node (0) at (2,1) {$\bullet$};
   \node (0) at (2,2) {$\bullet$};
  \node (0) at (0,3){$\bullet$};
   \node (0) at (1.6,2) {$\bullet$};
  \node (0) at (1.6,1) {$\bullet$};
   \node (0) at (1.2,1) {$\bullet$};
  \node (0) at (1.2,2) {$\bullet$};
  \node (0) at (0.8,2) {$\bullet$};
   \node (0) at (0.8,1) {$\bullet$};
  \node (0) at (-2,1) {$\bullet$};
    \node (0) at (-2,2) {$\bullet$};
   \node (0) at (-1.6,2) {$\bullet$};
  \node (0) at (-1.6,1) {$\bullet$};
   \node (0) at (-1.2,1) {$\bullet$};
  \node (0) at (-1.2,2) {$\bullet$};
 \node (1) at (-0.2,1.5) {$\cdots~~\cdots$};
 \end{tikzpicture} \\
  friendship graph   &~~~~~~~~~~~~& book graph 
\end{tabular}
    \caption{The friendship graphs and book graphs used in Theorem \ref{thm:d=2}}
    \label{fig:petal-book}
\end{figure}
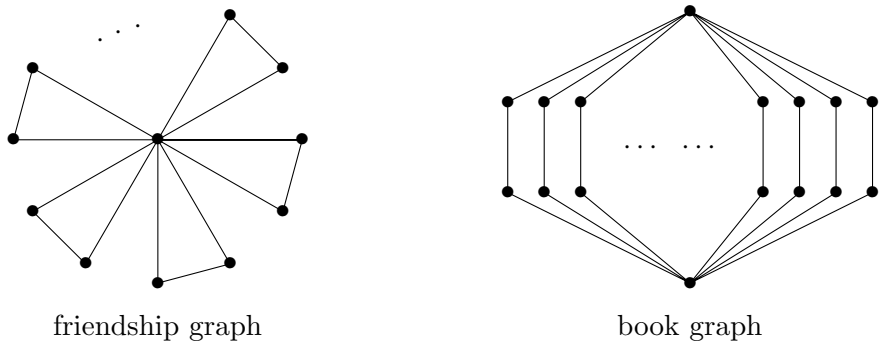

\section{Proof of the main results}\label{section:proof}

\subsection{Proof of Theorem \ref{thm:d-graph-projective} and Theorem \ref{th:main2}}
%It suffices to prove Theorem \ref{thm:d-graph-projective}. 
For a subset $S\subset V$, we use $|S|$ to denote the number of elements in $S$. 
Before proving Theorem \ref{thm:d-graph-projective}, we first establish a series of auxiliary lemmas.

\begin{lemma}\label{coro:1}
Let $G=(V,E)$ be a graph in $\mathcal{G}$. Then 
$$\big(\mathrm{gap}(G)\big)^2=\min_{f:V\rightarrow{\mathbb{R}},f\ne\mathbf{0}}\frac{\sum\limits_{u,v\in V}\sum\limits_{w\in \mathcal{N}(u)\cap\mathcal{N}(v)}\frac{f(u)f(v)}{\deg (w)\sqrt{\deg (u)\deg (v)}}}{\sum\limits_{w\in V}f(w)^2}.$$
Particularly, if $G$ is $d$-regular, then $$\big(\mathrm{gap}(G)\big)^2=\frac{1}{d^2}\min_{f:V\rightarrow{\mathbb{R}},f\ne\mathbf{0}}\frac{\sum\limits_{u,v\in V}{ |\mathcal{N}(u)\cap\mathcal{N}(v)|f(u)f(v)}}{\sum\limits_{w\in V}f(w)^2}.$$
\end{lemma}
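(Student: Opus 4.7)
The plan is to translate the spectral-gap quantity $\mathrm{gap}(G)$ into a Rayleigh quotient for an appropriate symmetric operator and then unfold the quadratic form to obtain the combinatorial expression in the statement.

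First I would use the basic identity $\mathbf{L}=\mathbf{I}-\mathbf{D}^{-1}\mathbf{A}$, which gives $\sigma(\mathbf{L})=\{1-\mu:\mu\in\sigma(\mathbf{D}^{-1}\mathbf{A})\}$, so that
\[
\mathrm{gap}(G)=\min_{\lambda\in\sigma(\mathbf{L})}|\lambda-1|=\min_{\mu\in\sigma(\mathbf{D}^{-1}\mathbf{A})}|\mu|.
\]
Although $\mathbf{D}^{-1}\mathbf{A}$ is not symmetric, it is similar (via $\mathbf{D}^{1/2}$) to the symmetric matrix $\mathbf{N}:=\mathbf{D}^{-1/2}\mathbf{A}\mathbf{D}^{-1/2}$, so $\sigma(\mathbf{D}^{-1}\mathbf{A})=\sigma(\mathbf{N})$ and consequently
\[
(\mathrm{gap}(G))^2=\min_{\mu\in\sigma(\mathbf{N})}\mu^2=\min\sigma(\mathbf{N}^2).
\]

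Next I would invoke the standard Rayleigh characterization for the symmetric positive semidefinite matrix $\mathbf{N}^2$, namely
\[
\min\sigma(\mathbf{N}^2)=\min_{g:V\to\mathbb{R},\,g\ne\mathbf{0}}\frac{\langle \mathbf{N}^2 g,g\rangle}{\langle g,g\rangle}=\min_{g\ne\mathbf{0}}\frac{\|\mathbf{N}g\|^2}{\|g\|^2}.
\]
Now I would compute $\mathbf{N}g$ pointwise: $(\mathbf{N}g)(w)=\frac{1}{\sqrt{\deg w}}\sum_{u\sim w}\frac{g(u)}{\sqrt{\deg u}}$, so that
\[
\|\mathbf{N}g\|^2=\sum_{w\in V}\frac{1}{\deg w}\Big(\sum_{u\sim w}\frac{g(u)}{\sqrt{\deg u}}\Big)^2.
\]

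The last step is a bookkeeping move: expand the square and swap the order of summation. Grouping pairs $(u,v)$ by their common neighbors $w\in\mathcal{N}(u)\cap\mathcal{N}(v)$ gives
\[
\|\mathbf{N}g\|^2=\sum_{u,v\in V}\sum_{w\in\mathcal{N}(u)\cap\mathcal{N}(v)}\frac{g(u)\,g(v)}{\deg(w)\sqrt{\deg(u)\deg(v)}},
\]
which, upon renaming $g$ as $f$ and dividing by $\|f\|^2=\sum_w f(w)^2$, yields the first displayed identity. For the $d$-regular case, one substitutes $\deg(w)=\deg(u)=\deg(v)=d$ so that $\deg(w)\sqrt{\deg(u)\deg(v)}=d^2$, and the sum over $w\in\mathcal{N}(u)\cap\mathcal{N}(v)$ collapses into the factor $|\mathcal{N}(u)\cap\mathcal{N}(v)|$, giving the second formula.

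There is no serious obstacle here; the only points requiring care are (i) explicitly invoking the similarity $\mathbf{D}^{-1}\mathbf{A}\sim\mathbf{N}$ to justify passing to a symmetric operator (so that the Rayleigh quotient is legitimate), and (ii) reorganizing the double sum $\sum_{w}\big(\sum_{u\sim w}\cdots\big)^2$ into a sum indexed by $(u,v)$ with an inner sum over common neighbors $w$, which is the step that produces the characteristic $|\mathcal{N}(u)\cap\mathcal{N}(v)|$ factor in the regular case.
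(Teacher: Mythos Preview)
Your proposal is correct and follows essentially the same approach as the paper: both pass from $\mathbf{L}$ to the symmetric matrix $\mathbf{N}=\mathbf{D}^{-1/2}\mathbf{A}\mathbf{D}^{-1/2}$ (the paper writes this as $\mathbf{I}-\mathbf{D}^{1/2}\mathbf{L}\mathbf{D}^{-1/2}$, which is the same thing), identify $(\mathrm{gap}(G))^2$ with the least eigenvalue of $\mathbf{N}^2$, and then express that via a Rayleigh quotient whose numerator is expanded into the common-neighbor sum. The only cosmetic difference is that the paper first computes the entries of $\mathbf{M}=\mathbf{N}^2$ and then writes down the Rayleigh quotient, whereas you compute $\|\mathbf{N}g\|^2$ directly and reindex; these are equivalent bookkeeping choices.
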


\begin{proof}
Let $\lambda_1,\cdots,\lambda_N$ be the eigenvalues of $\mathbf{L}$. Then $(1-\lambda_1)^2,\cdots,(1-\lambda_N)^2$ are the eigenvalues of $\mathbf{M}=(\mathbf{I}-\mathbf{D}^\frac{1}{2}\mathbf{L} \mathbf{D}^{-\frac{1}{2}})^2$, where $\mathbf{D}=\mathrm{diag}(\deg   v_1,\cdots,\deg   v_N)$ is the diagonal matrix consisting of the degrees. Therefore, $\min_{\lambda\in\sigma(\mathbf{L})}|1-\lambda|^2$ is the least eigenvalue of $M$. We notice that the matrix entries of $\mathbf{D}^\frac{1}{2}\mathbf{L} \mathbf{D}^{-\frac{1}{2}}$ are $$(\mathbf{D}^\frac{1}{2}\mathbf{L} \mathbf{D}^{-\frac{1}{2}})_{uv}=\begin{cases}1&u=v\\-\frac{1}{\sqrt{\deg(u)\deg(v)}}&u\sim v\\0&\text{otherwise}\end{cases}$$
where $u,v\in V:=\{1,\cdots,N\}$. 
Thus, the entries of the matrix $M$ are $$M_{uv}=\sum_{w\in \mathcal{N}(u)\cap\mathcal{N}(v)}\frac{1}{\deg (w)\sqrt{\deg (u)\deg (v)}}$$
and the least eigenvalue of $M$ can be expressed as $$%\min_{\lambda\in\sigma(\mathbf{L})}|1-\lambda|^2=
\lambda_{\min}(M):=\min_{f:V\rightarrow{\mathbb{R}},f\ne\mathbf{0}}\frac{\sum_{u,v\in V}\sum_{w\in \mathcal{N}(u)\cap\mathcal{N}(v)}\frac{f(u)f(v)}{\deg (w)\sqrt{\deg (u)\deg (v)}}}{\sum_{w\in V}f(w)^2}.$$
Lemma \ref{coro:1} then follows from $\big(\mathrm{gap}(G)\big)^2=\min\limits_{\lambda\in\sigma(\mathbf{L})}|1-\lambda|^2=\lambda_{\min}(M)$. 
\end{proof}

\begin{lemma}\label{lemma:gap-inequality}
Let $G=(V,E)$ be a graph in $\mathcal{G}$. 
For any distinct $u,v\in V$, 
\begin{equation}\label{eq:repeat-key-opti-d}
\sum_{w\in \mathcal{N}(u)\triangle \mathcal{N}(v)}\left(\frac{1}{\deg w}- \big(\mathrm{gap}(G)\big)^2 \right)\ge  2 |\mathcal{N}(u)\cap \mathcal{N}(v)| \big(\mathrm{gap}(G)\big)^2 
\end{equation}
where $\mathcal{N}(u)\triangle \mathcal{N}(v)%:=(\mathcal{N}(u)\cup \mathcal{N}(v))\setminus(\mathcal{N}(u)\cap \mathcal{N}(v))
$ is the symmetric difference of $\mathcal{N}(u)$ and $\mathcal{N}(v)$.
\end{lemma}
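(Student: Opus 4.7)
The plan is to apply the variational characterization of $\big(\mathrm{gap}(G)\big)^2$ provided by Lemma~\ref{coro:1} to a specific test function supported on the two-vertex set $\{u,v\}$. The natural candidate, engineered so that the $\sqrt{\deg(x)\deg(y)}$ normalizations in the Rayleigh quotient absorb cleanly, is
$$f=\sqrt{\deg u}\,\mathbf{1}_u-\sqrt{\deg v}\,\mathbf{1}_v.$$
This is essentially the only nontrivial choice in the argument; everything else is bookkeeping.

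With this $f$, the denominator $\sum_w f(w)^2$ equals $\deg u+\deg v$. In the numerator of the Rayleigh quotient, only the four pairs of summation indices lying in $\{u,v\}$ contribute. The two diagonal pairs produce $\sum_{w\in\mathcal{N}(u)}\frac{1}{\deg w}$ and $\sum_{w\in\mathcal{N}(v)}\frac{1}{\deg w}$ respectively (since $f(u)^2/\deg u=1$ and similarly at $v$), while the two cross pairs combine, via $f(u)f(v)/\sqrt{\deg u\deg v}=-1$, into $-2\sum_{w\in\mathcal{N}(u)\cap\mathcal{N}(v)}\frac{1}{\deg w}$. Applying the summand-wise analogue of the set identity $|\mathcal{N}(u)|+|\mathcal{N}(v)|-2|\mathcal{N}(u)\cap\mathcal{N}(v)|=|\mathcal{N}(u)\triangle\mathcal{N}(v)|$ collapses the numerator to exactly $\sum_{w\in\mathcal{N}(u)\triangle\mathcal{N}(v)}\frac{1}{\deg w}$.

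Feeding this into the minimum principle of Lemma~\ref{coro:1} gives
$$\sum_{w\in\mathcal{N}(u)\triangle\mathcal{N}(v)}\frac{1}{\deg w}\ge \big(\mathrm{gap}(G)\big)^2(\deg u+\deg v).$$
To conclude, I would use $\deg u+\deg v=|\mathcal{N}(u)\triangle\mathcal{N}(v)|+2|\mathcal{N}(u)\cap\mathcal{N}(v)|$ to split the right-hand side, then transfer the $|\mathcal{N}(u)\triangle\mathcal{N}(v)|\big(\mathrm{gap}(G)\big)^2$ term to the left, producing exactly \eqref{eq:repeat-key-opti-d}.

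I do not expect a real obstacle: once the test function is identified, the remaining steps are routine. The only design insight worth flagging is that the opposite signs and the weights $\sqrt{\deg u},\sqrt{\deg v}$ are forced together, because they are precisely what turns the union-style sums $\sum_{w\in\mathcal{N}(u)}+\sum_{w\in\mathcal{N}(v)}$ into a symmetric-difference sum after cancellation with the cross terms; any other normalization or a same-sign combination would leave spurious intersection contributions and fail to yield the target inequality.
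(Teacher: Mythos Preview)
Your proposal is correct and follows essentially the same route as the paper: the paper also plugs the test function $f_{u,v}(x)=\sqrt{\deg u}\,\mathbf{1}_{x=u}-\sqrt{\deg v}\,\mathbf{1}_{x=v}$ into the variational formula of Lemma~\ref{coro:1}, simplifies the numerator to $\sum_{w\in\mathcal{N}(u)\triangle\mathcal{N}(v)}\frac{1}{\deg w}$, and then uses $\deg u+\deg v=|\mathcal{N}(u)\triangle\mathcal{N}(v)|+2|\mathcal{N}(u)\cap\mathcal{N}(v)|$ to rearrange into \eqref{eq:repeat-key-opti-d}.
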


\begin{proof}
Taking a test function $f_{u,v}:V\to \mathbb{R}$ defined by
$$f_{u,v}(x)=\begin{cases}
\sqrt{\deg u},&\text{ if }x=u\\
-\sqrt{\deg v},&\text{ if }x=v\\
0,&\text{ otherwise}
\end{cases}$$
we have by Lemma \ref{coro:1} 
$$\frac{\sum_{u',v'\in V}\sum_{w\in \mathcal{N}(u')\cap\mathcal{N}(v')}\frac{f_{u,v}(u')f_{u,v}(v')}{\deg (w)\sqrt{\deg (u')\deg (v')}}}{\sum_{w\in V}f_{u,v}(w)^2}\ge \big(\mathrm{gap}(G)\big)^2.$$
Simplifying the left hand side as
$$\frac{\sum\limits_{w\in \mathcal{N}(u)}\frac{1}{\deg w}+\sum\limits_{w\in\mathcal{N}(v)}\frac{1}{\deg w}-2\sum\limits_{w\in\mathcal{N}(u)\cap\mathcal{N}(v)}\frac{1}{\deg w}}{\deg u+\deg v}=\frac{\sum\limits_{w\in \mathcal{N}(u)\triangle \mathcal{N}(v)}\frac{1}{\deg w}}{\deg u+\deg v}$$
we derive 
$$\sum\limits_{w\in \mathcal{N}(u)\triangle \mathcal{N}(v)}\frac{1}{\deg w}\ge (\deg u+\deg v) \big(\mathrm{gap}(G)\big)^2$$
which yields \eqref{eq:repeat-key-opti-d} by noting that $\deg u+\deg v=|\mathcal{N}(u)\triangle \mathcal{N}(v)|+2|\mathcal{N}(u)\cap \mathcal{N}(v)|$.
\end{proof}

\begin{lemma}\label{lemma:reduce-to-regular}
Given $d\ge 3$ and  $G\in\mathcal{G}_{\ge d}$, if $\mathrm{gap}(G)\ge \frac{\sqrt{d-1}}{d}$, then $\mathrm{gap}(G)=\frac{\sqrt{d-1}}{d}$ and $G$ is $d$-regular. 
\end{lemma}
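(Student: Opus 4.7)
Let $\gamma := \mathrm{gap}(G)^2$ and assume $\gamma \ge (d-1)/d^2$; write $V_d := \{v \in V : \deg v = d\}$ and $V_{>d} := V \setminus V_d$. The plan is to show $G$ must be $d$-regular, after which the equality $\mathrm{gap}(G) = \sqrt{d-1}/d$ follows by applying Theorem~\ref{th:main} to $G$ (which, being $d$-regular, has max degree $d$, so $\min_{\lambda \in \sigma(\mathbf{A})}|\lambda| \le \sqrt{d-1}$ and hence $\mathrm{gap}(G) = \min_{\lambda \in \sigma(\mathbf{A})}|\lambda|/d \le \sqrt{d-1}/d$, matching the assumed lower bound).

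The principal tool is Lemma~\ref{lemma:gap-inequality}: combining it with $1/\deg w \le 1/d$ (using $\deg w \ge d$) yields, for all distinct $u,v \in V$,
\begin{equation*}
(\deg u + \deg v)(1 - d\gamma) \ge 2|\mathcal{N}(u) \cap \mathcal{N}(v)|.
\end{equation*}
Applied to any pair $u,v \in V_d$ with at least one common neighbor, this already forces $\gamma = (d-1)/d^2$; and tracing back to Lemma~\ref{lemma:gap-inequality}, both intermediate inequalities must be tight, which forces (i) every $w \in \mathcal{N}(u) \triangle \mathcal{N}(v)$ to have degree exactly $d$, and (ii) $f_{u,v}$ to be an eigenvector of $\mathbf{M} := (\mathbf{I} - \mathbf{D}^{1/2} \mathbf{L} \mathbf{D}^{-1/2})^2$ for the minimum eigenvalue $\gamma$. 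Since $|\mathcal{N}(u) \triangle \mathcal{N}(v)| = 2d - 2$, each such pair therefore injects $2d-2$ new vertices into $V_d$. I would then iterate this ``propagation'' step: the newly identified members of $V_d$ form fresh pairs with common neighbors, expanding $V_d$ further, and by the connectivity of $G$ this procedure must terminate at $V_d = V$, i.e., $G$ is $d$-regular.

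Two corner cases escape the propagation. In case (a), $V_d = \emptyset$, i.e., $G \in \mathcal{G}_{\ge d+1}$. Then Lemma~\ref{lemma:gap-inequality} with the sharper bound $1/\deg w \le 1/(d+1)$ gives the stronger inequality $(\deg u+\deg v)(1-(d+1)\gamma) \ge 2|\mathcal{N}(u) \cap \mathcal{N}(v)|$, which combined with $\gamma \ge (d-1)/d^2$ forces $|V_{d+1} \cap \mathcal{N}(w)| \le 1$ for all $w$. For any $v \in V_{d+1}$, at most one of its neighbors lies in $V_{d+1}$ and the remainder have degree $\ge d+2$, so
\[R(\delta_v) = \frac{\sum_{w \sim v} 1/\deg w}{d+1} \le \frac{d^2 + 2d + 2}{(d+1)^2(d+2)} < \frac{d-1}{d^2} \le \gamma,\]
where the strict numerical inequality is verified by cross-multiplication for every $d \ge 3$, contradicting the required $R(\delta_v) \ge \gamma$. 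In case (b), $V_d \ne \emptyset$ but no two of its elements share a common neighbor (so $|V_d \cap \mathcal{N}(w)| \le 1$ for all $w$); this is handled by an analogous but more delicate analysis of the $2 \times 2$ positive-semidefinite condition $(A - \gamma d)(B - \gamma \deg u) \ge C^2$ associated to the test function $g = a\delta_v + b\delta_u$ (for $v \in V_d$ and $u \in \mathcal{N}(v) \cap V_{>d}$), exploiting the structural constraint to force a violation of PSD.

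The main obstacle is executing the propagation step cleanly: verifying that the iterative expansion of $V_d$ through common-neighbor pairs reaches every vertex of the connected graph. The essential leverage comes from combining the explicit local degree constraint (each pair identified contributes $2d-2$ degree-$d$ vertices) with the pointwise linear identities $\mathbf{M}_{xu} = \mathbf{M}_{xv}$ for $x \ne u,v$, which follow from $\mathbf{M} f_{u,v} = \gamma f_{u,v}$; these identities supply the rigidity needed to carry the degree-$d$ condition past the unique common neighbor $w_0$ (whose degree is \emph{not} constrained by the equality case) and into the rest of the graph.
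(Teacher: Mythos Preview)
Your outline identifies the right starting point—combining Lemma~\ref{lemma:gap-inequality} with $1/\deg w\le 1/d$ to force tightness whenever two degree-$d$ vertices share a common neighbor—and this is essentially the paper's Claim~3. But two genuine gaps remain. The first is the propagation itself, which you flag as the main obstacle but do not execute: knowing $\mathcal{N}(u)\triangle\mathcal{N}(v)\subset V_d$ does \emph{not} give $V_d=V$ by connectivity alone, because the unique common neighbor $w_0$ escapes the constraint, and a naive iteration can stall at such vertices. The eigenvector identity you propose yields only $\sum_{w\in\mathcal{N}(x)\cap\mathcal{N}(u)}1/\deg w=\sum_{w\in\mathcal{N}(x)\cap\mathcal{N}(v)}1/\deg w$ for $x\notin\{u,v\}$, which does not obviously pin down $\deg w_0$ or push the argument past it. The paper resolves this differently: it first proves two claims valid for \emph{arbitrary} pairs $u,v$ with a common neighbor (not just those in $V_d$), showing that most of $\mathcal{N}(u)\triangle\mathcal{N}(v)$ has degree in $\{d,d+1\}$ and then that most has degree exactly $d$; only then does it invoke the Claim~3 equality case, and it closes with an explicit vertex chase (through vertices $x,y,x_1,x_2,z_1,w_1,u,w,v$) that forces $\deg=d$ back onto every vertex of an arbitrary initial path.

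The second gap is your case~(b), which is only sketched. The single-vertex test is too weak here: if $v\in V_d$ has one neighbor in $V_d$ and $d-1$ neighbors of degree $d+1$, then $R(\delta_v)=(d^2+1)/\bigl(d^2(d+1)\bigr)>(d-1)/d^2$, so no contradiction arises. The $2\times2$ PSD analysis you allude to is not carried out, and there is no indication it succeeds without further structural input. The paper sidesteps case~(b) entirely: because its Claims~1--2 apply to arbitrary pairs, starting from \emph{any} path $v\sim w\sim u$ one can always manufacture two degree-$d$ vertices with a common neighbor, so the bootstrap never fails to start. (Two minor points: in case~(a) you tacitly assume minimum degree exactly $d+1$, though minimum degree $\ge d+2$ is dispatched immediately by $R(\delta_v)\le 1/(d+2)<(d-1)/d^2$; and your forward reference to Theorem~\ref{th:main} for the final equality is unnecessary, since $\gamma=(d-1)/d^2$ already falls out of the tightness analysis.)
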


\begin{proof}
We complete the proof by several claims. 

Claim 1: For any distinct vertices $u$ and $v$ with $\mathcal{N}(u)\cap \mathcal{N}(v)\ne\emptyset$, there holds $$\big|\big\{w\in \mathcal{N}(u)\triangle \mathcal{N}(v):\deg w\in\{d,d+1\}\big\}\big|\ge 2d-2.$$

Proof of Claim 1: Since $|\mathcal{N}(u)\cap \mathcal{N}(v)|\ge 1$, it follows from the inequality \eqref{eq:repeat-key-opti-d} in Lemma \ref{lemma:gap-inequality} that 
\begin{equation}\label{eq:repeat-key-opti-d2} 
\sum_{w\in \mathcal{N}(u)\triangle \mathcal{N}(v)}\left(\frac{1}{\deg w}-\frac{d-1}{d^2} \right)\ge 2  \frac{d-1}{d^2}.      
\end{equation}
Note that when $\deg w\ge d+2$, we have $ \frac{1}{\deg w}-\frac{d-1}{d^2}\le \frac{1}{d+2}-\frac{d-1}{d^2}=\frac{2-d}{d^2(d+2)}<0$. 
Suppose the contrary, that Claim 1 does not hold. Then \begin{align*}
\sum_{w\in \mathcal{N}(u)\triangle \mathcal{N}(v)}\left(\frac{1}{\deg w}-\frac{d-1}{d^2} \right)&\le\sum_{\substack{w\in \mathcal{N}(u)\triangle \mathcal{N}(v)\\ \deg w\in\{d,d+1\}}}\left(\frac{1}{\deg w}-\frac{d-1}{d^2} \right)\\&\le (2d-3) \left(\frac{1}{d}-\frac{d-1}{d^2}\right)<
2\frac{d-1}{d^2},
\end{align*}
which contradicts \eqref{eq:repeat-key-opti-d2}. 

Claim 2: For any distinct vertices $u$ and $v$ such that $\mathcal{N}(u)\cap \mathcal{N}(v)\ne\emptyset$, if  $$\big|\big\{w\in \mathcal{N}(u)\triangle \mathcal{N}(v):\deg w\in\{d,d+1\}\big\}\big|\le 2d$$
then $|\{w\in \mathcal{N}(u)\triangle \mathcal{N}(v):\deg w=d\}|\ge 2d-3$. 

Proof of Claim 2: If $|\{w\in \mathcal{N}(u)\triangle \mathcal{N}(v):\deg w=d\}|\le 2d-4$, then \begin{align*}
&\sum_{w\in \mathcal{N}(u)\triangle \mathcal{N}(v)}\left(\frac{1}{\deg w}-\frac{d-1}{d^2} \right)\\
&\le\sum_{\substack{w\in \mathcal{N}(u)\triangle \mathcal{N}(v)\\ \deg w=d}}\left(\frac{1}{\deg w}-\frac{d-1}{d^2} \right)+\sum_{\substack{w\in \mathcal{N}(u)\triangle \mathcal{N}(v)\\ \deg w=d+1}}\left(\frac{1}{\deg w}-\frac{d-1}{d^2} \right)\\
&\le (2d-4) \left(\frac{1}{d}-\frac{d-1}{d^2}\right)+ 2d \left(\frac{1}{d+1}-\frac{d-1}{d^2}\right)\\
&=\frac{2d-4}{d^2}+\frac{2}{d(d+1)}<\frac{2d-4}{d^2}+\frac{2}{d^2}
=2\cdot \frac{d-1}{d^2},
\end{align*}
which contradicts \eqref{eq:repeat-key-opti-d2}. 

\vspace{0.2cm}

\begin{figure}[h]
    \centering
    \begin{tikzpicture}[scale=1.6]
\draw (0,0)--(1,0)--(2,0)--(3,0);
\draw[dashed] (0.2,0.9)--(0,0)--(-1,0);\draw%[dashed]
(1.8,0.9)--(2,0);
\draw (3.2,0.9)--(3,0)--(4,0);
\draw (4.2,0.9)--(4,0)--(5,0)--(6,0);
\node (v) at  (0,0) {$\bullet$};
\node (w) at  (1,0) {$\bullet$};
\node (u) at  (2,0) {$\bullet$};
\node (w1) at  (3,0) {$\bullet$};
\node (w2) at (1.8,0.9){$\bullet$};
\node (x) at  (4,0) {$\bullet$};
\node (x1) at  (5,0) {$\bullet$};
\node (z1) at  (6,0) {$\bullet$};
\node (y) at  (3.2,0.9) {$\bullet$};
\node (x2) at  (4.2,0.9) {$\bullet$};
\node (v) at  (-0.1,0.2) {$v$};
\node (w) at  (0.8,0.2) {$w$};
\node (u) at  (2.1,0.22) {$u$};
\node (w1) at  (2.9,0.2) 
{$w_1$};\node (w2) at (2,0.8){$w_2$};\node (x) at  (3.9,0.2) {$x$};\node (u) at  (3,0.8) {$y$};
\node (x1) at  (4.9,0.2) {$x_1$};\node (x2) at  (4,0.8) {$x_2$};
\node (z1) at  (5.9,0.2) {$z_1$};
\end{tikzpicture}
    \caption{The vertices in the proof of Lemma \ref{lemma:reduce-to-regular}}
    \label{fig:reduace-to-regular}
\end{figure}
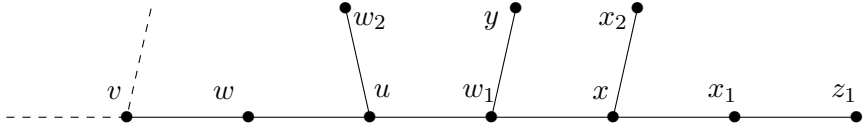

We are in a position to prove Lemma \ref{lemma:reduce-to-regular} with an illustration in Figure \ref{fig:reduace-to-regular}. 
For any path $v\sim w\sim u$, applying Claim 1 to $u$ and $v$, we either have  $$|\{w\in \mathcal{N}(u)\setminus \mathcal{N}(v):\deg w\in\{d,d+1\}\}|\ge d-1\ge 2$$ or 
$$|\{w\in \mathcal{N}(v)\setminus \mathcal{N}(u):\deg w\in\{d,d+1\}\}|\ge d-1\ge 2.$$ Without loss of generality, we may assume that there are two vertices $w_1$ and $w_2$ in $\mathcal{N}(u)\setminus \mathcal{N}(v)$ with $\deg w_1,\deg w_2\in\{d,d+1\}$. Then, we have the path $w_1\sim u\sim w_2$ in $G$, and $| \mathcal{N}(w_1)\triangle \mathcal{N}(w_2)|\le \deg w_1+\deg w_2-2\le 2d$. Applying Claim 2 to $w_1$ and $w_2$, we obtain $|\{w\in \mathcal{N}(w_1)\triangle \mathcal{N}(w_2):\deg w=d\}|\ge 2d-3\ge 3$. Hence, without loss of generality, we can assume that there are two vertices $x$ and $y$ in $\mathcal{N}(w_1)\setminus \mathcal{N}(w_2)$ with $\deg x=\deg y=d$.  Applying Claim 1 to $x$ and $y$, we derive 
\begin{align*}
\sum_{w\in \mathcal{N}(x)\triangle \mathcal{N}(y)}\left(\frac{1}{\deg w}-\mathrm{gap}(G) \right)&\le\sum_{\substack{w\in \mathcal{N}(x)\triangle \mathcal{N}(y)\\ \deg w\in\{d,d+1\}}}\left(\frac{1}{\deg w}-\frac{d-1}{d^2} \right)\\&\le |\mathcal{N}(x)\triangle \mathcal{N}(y)|
\left(\frac{1}{d}-\frac{d-1}{d^2} \right)
\\&\le (2d-2) \left(\frac{1}{d}-\frac{d-1}{d^2}\right)=
2\frac{d-1}{d^2}
\\&\le2 |\mathcal{N}(x)\cap \mathcal{N}(y)| \big(\mathrm{gap}(G)\big)^2,
\end{align*}
and again, combining this with the inequality \eqref{eq:repeat-key-opti-d}, there actually holds the equality, which implies that $|\mathcal{N}(x)\cap \mathcal{N}(y)|=1$, and $\deg w=d$ for any $w\in \mathcal{N}(x)\triangle \mathcal{N}(y)$, and $\mathrm{gap}(G)=\frac{\sqrt{d-1}}{d}$. The above reasoning process indeed proves the following claim. 

\vspace{0.1cm}

Claim 3: For any distinct vertices $x'$ and $y'$ such that $\mathcal{N}(x')\cap \mathcal{N}(y')\ne\emptyset$, if $\deg x'=\deg y'=d$, then $|\mathcal{N}(x')\cap \mathcal{N}(y')|=1$, $|\mathcal{N}(x')\setminus \mathcal{N}(y')|\ge d-1\ge 2$, $|\mathcal{N}(y')\setminus \mathcal{N}(x')|\ge d-1\ge 2$, and for any $w\in \mathcal{N}(x')\triangle \mathcal{N}(y')$, $\deg w=d$.

\vspace{0.1cm}

Now, applying Claim 3 to $x$ and $y$, we have for any distinct vertices $x_1,x_2\in \mathcal{N}(x)\setminus \mathcal{N}(y)$, $\deg x_1=\deg x_2=d$, and then applying Claim 3 to $x_1$ and $x_2$, we can take $z_1\in \mathcal{N}(x_1)\setminus \mathcal{N}(x)$ with $\deg z_1=d$. Clearly, $w_1\not\sim z_1$, otherwise, $\mathcal{N}(z_1)\cap \mathcal{N}(x)$ contains at least two distinct vertices $x_1$ and $w_1$, which contradicts Claim 3.

Since $w_1\in \mathcal{N}(z_1)\triangle \mathcal{N}(x)$, we can apply Claim 3 to $z_1$ and $x$ to derive $\deg w_1=d$. Repeating the process, we apply  Claim 3 to $x_1$ and $w_1$. Then it follows from $u\in \mathcal{N}(x_1)\triangle \mathcal{N}(w_1)$ that $\deg u=d$. Again, applying Claim 3 to $x$ and $u$, we have $\deg w=d$; and applying  Claim 3 to $w_1$ and $w$, we have $\deg v=d$. 

\vspace{0.1cm} 

Note that, we start with any path $u\sim w\sim v$ in $G$ and recursively derive that $\deg u=\deg w=\deg v=d$. By the arbitrariness of $u$ and $w$ and $v$, %connectedness of $G$, 
we indeed derive that every vertex has degree $d$, meaning that $G$ is $d$-regular.
\end{proof}

Lemma \ref{lemma:reduce-to-regular} indicates that we can reduce the non-regular case to regular case for the extremal graphs with the largest spectral gap $\frac{\sqrt{d-1}}{d}$.

\begin{lemma}\label{lemma:=d-no-4-cycle}
Let $G$ be a $d$-regular graph. If $\mathrm{gap}(G)>\frac{\sqrt{d-2}}{d}$, then $G$ is 4-cycle free, %there is no 4-cycle in $G$
in other words, for any $u,v\in V(G)$ with $u\ne v$, there always holds $|\mathcal{N}(u)\cap\mathcal{N}(v)|\le1$.
\end{lemma}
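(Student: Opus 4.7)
The plan is to argue by contrapositive: assume that there exist distinct vertices $u,v\in V(G)$ with $|\mathcal{N}(u)\cap\mathcal{N}(v)|\ge 2$, and deduce $\mathrm{gap}(G)\le\frac{\sqrt{d-2}}{d}$. The key input is the inequality \eqref{eq:repeat-key-opti-d} from Lemma \ref{lemma:gap-inequality}, which is exactly the test-function estimate designed for this kind of pair-of-vertices reasoning.

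Since $G$ is $d$-regular, every vertex $w$ has $\deg w=d$, and moreover
$$|\mathcal{N}(u)\triangle\mathcal{N}(v)|=|\mathcal{N}(u)|+|\mathcal{N}(v)|-2|\mathcal{N}(u)\cap\mathcal{N}(v)|=2d-2k,$$
where I write $k:=|\mathcal{N}(u)\cap\mathcal{N}(v)|$. Substituting these values into \eqref{eq:repeat-key-opti-d} collapses the sum to a clean algebraic inequality
$$(2d-2k)\Big(\tfrac{1}{d}-\big(\mathrm{gap}(G)\big)^2\Big)\ge 2k\,\big(\mathrm{gap}(G)\big)^2,$$
which, after combining the $\big(\mathrm{gap}(G)\big)^2$ terms, rearranges to
$$\big(\mathrm{gap}(G)\big)^2\le \frac{d-k}{d^2}.$$

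If $k\ge 2$, this immediately gives $\big(\mathrm{gap}(G)\big)^2\le\frac{d-2}{d^2}$, i.e.\ $\mathrm{gap}(G)\le\frac{\sqrt{d-2}}{d}$, contradicting the hypothesis. Hence no two distinct vertices can share two or more common neighbors, which is precisely the statement that $G$ contains no $4$-cycle.

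There is not really a hard step here: the whole content sits in Lemma \ref{lemma:gap-inequality}, and one only has to specialize to the $d$-regular setting and count. The one place to be careful is the bookkeeping of $|\mathcal{N}(u)\triangle\mathcal{N}(v)|$ (using regularity rather than the inclusion-exclusion identity $\deg u+\deg v=|\mathcal{N}(u)\triangle\mathcal{N}(v)|+2|\mathcal{N}(u)\cap\mathcal{N}(v)|$ appearing at the end of the proof of Lemma \ref{lemma:gap-inequality}), so that the coefficient of $\big(\mathrm{gap}(G)\big)^2$ on the right-hand side correctly becomes $2d$, allowing the factor $d-k$ to emerge cleanly in the numerator.
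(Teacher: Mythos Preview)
Your argument is correct and is essentially the paper's own proof: the paper applies Lemma~\ref{coro:1} directly with the test function $f(u)=1$, $f(v)=-1$, $f\equiv 0$ elsewhere to obtain $\big(\mathrm{gap}(G)\big)^2\le\frac{2d-2|\mathcal{N}(u)\cap\mathcal{N}(v)|}{2d^2}$, which is exactly the inequality $\big(\mathrm{gap}(G)\big)^2\le\frac{d-k}{d^2}$ you derive. You simply route the same test-function computation through Lemma~\ref{lemma:gap-inequality} (whose proof uses precisely this test function), so the two arguments coincide.
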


\begin{proof}
Suppose the contrary, that $|\mathcal{N}(u)\cap\mathcal{N}(v)|\ge2$, where $u$ and $v$ are distinct vertices of $G$. By Lemma \ref{coro:1}, we can take a test function $f$ defined by $f(u)=1$, $f(v)=-1$, and $f(x)=0$ whenever $x\not\in\{u,v\}$. Then we have
$$\big(\mathrm{gap}(G)\big)^2\le\frac{2d-2|\mathcal{N}(u)\cap\mathcal{N}(v)|}{2d^2}\le\frac{2d-4}{2d^2}=\frac{d-2}{d^2}$$
which implies $\mathrm{gap}(G)\le\frac{\sqrt{d-2}}{d}$, a contradiction.
\end{proof}

Now we introduce the (unweighted) neighborhood graph which can also be obtained from $G^{[2]}$ by resetting all the weights to 1 (see \cite{Bauer13,Kim02,Roberts}): 
\begin{definition}\label{def:construct-new-graph}
Given a connected graph $G=(V,E)$, we define $\phi(G)$ as follows: 
\begin{itemize}
\item the vertex set of $\phi(G)$ is the same to that of $G$, i.e.,  $V(\phi(G)):=V$
\item two vertices $u$ and $v$ are adjacent in $\phi(G)$ if and only if they have common neighbors in $G$, that is, %$\{u,v\}\in E(\phi(G))$ $\Longleftrightarrow$ $|\mathcal{N}(u)\cap\mathcal{N}(v)|\ge1$ with $u\ne v$.
$$E(\phi(G)):=\big\{\{u,v\}\subset V:u\ne v\text{ and }|\mathcal{N}(u)\cap\mathcal{N}(v)|\ge1\big\}.$$
\end{itemize}
We call $\phi(G):=(V,E(\phi(G)))$ the (unweighted) \emph{neighborhood graph} of $G$.
\end{definition}

\begin{lemma}\label{lemma:phi}
Let $G$ be a 4-cycle free $d$-regular graph. Then every vertex in $\phi(G)$ has exactly $(d^2-d)$ neighbors, that is, $|\mathcal{N}_{\phi(G)}(v)|=d^2-d$, where $\mathcal{N}_{\phi(G)}(v)$ denotes the neighborhood of a vertex $v$ in $\phi(G)$.
\end{lemma}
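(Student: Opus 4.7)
The plan is to count the neighbors of a fixed vertex $v$ in $\phi(G)$ by expressing the set $\mathcal{N}_{\phi(G)}(v)$ as a union indexed by the $G$-neighbors of $v$, and then exploit the 4-cycle free condition to argue that the union is disjoint.

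First I would unpack the definition: a vertex $u\ne v$ belongs to $\mathcal{N}_{\phi(G)}(v)$ iff there exists some $w\in V$ with $w\in\mathcal{N}(u)\cap \mathcal{N}(v)$. Equivalently,
\[
\mathcal{N}_{\phi(G)}(v)=\Big(\bigcup_{w\in\mathcal{N}(v)}\mathcal{N}(w)\Big)\setminus\{v\}=\bigcup_{w\in\mathcal{N}(v)}\big(\mathcal{N}(w)\setminus\{v\}\big).
\]
Since $G$ is $d$-regular, $|\mathcal{N}(v)|=d$ and $|\mathcal{N}(w)\setminus\{v\}|=d-1$ for every $w\in\mathcal{N}(v)$ (note $v\in\mathcal{N}(w)$ because $w\sim v$).

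The key step is to show that the sets $\{\mathcal{N}(w)\setminus\{v\}\}_{w\in\mathcal{N}(v)}$ are pairwise disjoint. Pick any two distinct $w_1,w_2\in\mathcal{N}(v)$; then $v\in\mathcal{N}(w_1)\cap\mathcal{N}(w_2)$, so this intersection is nonempty. By the 4-cycle free hypothesis (in the form stated in Lemma \ref{lemma:=d-no-4-cycle}, namely $|\mathcal{N}(w_1)\cap\mathcal{N}(w_2)|\le 1$), we conclude $\mathcal{N}(w_1)\cap\mathcal{N}(w_2)=\{v\}$, which is exactly the disjointness of $\mathcal{N}(w_1)\setminus\{v\}$ and $\mathcal{N}(w_2)\setminus\{v\}$. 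Summing cardinalities gives $|\mathcal{N}_{\phi(G)}(v)|=d(d-1)=d^2-d$, as required.

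There is no real obstacle here; the whole content is the observation that the 4-cycle free assumption is exactly what promotes the naive upper bound $d(d-1)$ on the number of length-2 walks out of $v$ (to distinct endpoints) into an equality. The only minor care needed is to note that $v$ itself sits in every $\mathcal{N}(w)$ for $w\in\mathcal{N}(v)$ and is to be removed once, which is already handled by the set difference in the displayed identity.
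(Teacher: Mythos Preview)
Your proof is correct and is essentially the same argument as the paper's: both express $\mathcal{N}_{\phi(G)}(v)$ as the union over $w\in\mathcal{N}(v)$ of the sets $\mathcal{N}(w)\setminus\{v\}$, and use the 4-cycle free hypothesis to show these $d$ sets of size $d-1$ are pairwise disjoint. The paper's version just writes things out with explicit indices $y_k$ and $z_{k,l}$ rather than set notation, but the content is identical.
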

Since Lemma \ref{lemma:phi} is very elementary and does not involve any information on spectral gaps, we put its proof in the Appendix. 

\begin{lemma}\label{lemma:k}
Let $G$ be a $d$-regular graph. If $$\mathrm{gap}(G)>\frac{\sqrt{d-2}}{d},$$
then $\phi(G)$ is the disjoint union of complete graphs.
\end{lemma}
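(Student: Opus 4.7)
\emph{The plan is to} argue by contradiction: assume $\phi(G)$ is not a disjoint union of complete graphs (equivalently, $\phi(G)$ contains an induced $P_3$) and deduce $\mathrm{gap}(G)\le\sqrt{d-2}/d$.

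\emph{Spectral setup.} By Lemma~\ref{lemma:=d-no-4-cycle}, the hypothesis forces $G$ to be $4$-cycle free, so $|\mathcal{N}(u)\cap\mathcal{N}(v)|\in\{0,1\}$ for all distinct $u,v\in V$. Combined with $d$-regularity, a direct computation of $A(G)^2$ yields the key identity
\[
A(\phi(G))=A(G)^2-dI,
\]
whence $\sigma(A(\phi(G)))=\{\mu^2-d:\mu\in\sigma(A(G))\}$. For $d$-regular $G$ we have $\mathrm{gap}(G)=\min_{\mu\in\sigma(A(G))}|\mu|/d$, so the hypothesis reformulates as the purely spectral condition $\lambda_{\min}(A(\phi(G)))>-2$. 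Moreover, Lemma~\ref{lemma:phi} ensures that $\phi(G)$ is $(d^2-d)$-regular.

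\emph{Identifying a low-eigenvalue subgraph.} Let $u,v,w\in V$ be the vertices of an induced $P_3$ in $\phi(G)$: $\{u,v\},\{v,w\}\in E(\phi(G))$ but $\{u,w\}\notin E(\phi(G))$. Let $a$ and $b$ be the unique (by 4-cycle freeness) common $G$-neighbors of $(u,v)$ and of $(v,w)$; necessarily $a\ne b$, since otherwise $a$ would also be a common $G$-neighbor of $u$ and $w$. The clique-partition structure of $\phi(G)$ — each set $\mathcal{N}_G(c)$ induces a $d$-clique of $\phi(G)$, and any two such cliques share at most one vertex — forces $\mathcal{N}_{\phi(G)}(v)$ to split into $d$ vertex-disjoint $(d-1)$-cliques, one per $G$-neighbor of $v$. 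Thus $\phi(G)[\{v\}\cup\mathcal{N}_{\phi(G)}(v)]$ contains as a spanning subgraph the windmill $W_d:=K_1\vee(d\cdot K_{d-1})$, i.e.\ $d$ copies of $K_d$ glued at $v$. A symmetric-ansatz computation ($f(v)=\alpha,\ f\equiv\beta$ on each petal) yields
\[
\lambda_{\min}(W_d)=\frac{(d-2)-\sqrt{5d^2-8d+4}}{2},
\]
the smaller root of $\lambda^2-(d-2)\lambda-d(d-1)=0$, which equals $-2$ when $d=3$ and is strictly less than $-2$ when $d\ge 4$.

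\emph{Main obstacle.} The induced subgraph $\phi(G)[\{v\}\cup\mathcal{N}_{\phi(G)}(v)]$ may strictly contain $W_d$ — extra $\phi(G)$-edges between different $(d-1)$-cliques of $\mathcal{N}_{\phi(G)}(v)$ are not excluded a priori, and adding edges can raise the smallest eigenvalue above $-2$ — so naive interlacing against $W_d$ does not close the proof. The delicate step I expect to be the main obstacle is to evaluate Lemma~\ref{coro:1} at the test vector $f$ given by $f\equiv 1$ on $\mathcal{N}_{\phi(G)}(v)$, $f(v)=-\tfrac12\bigl(d-2+\sqrt{5d^2-8d+4}\bigr)$ (the $W_d$-eigenvector for $\lambda_{\min}(W_d)$), and $f\equiv 0$ elsewhere. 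Each extra cross-clique edge inside $\mathcal{N}_{\phi(G)}(v)$ adds $+2/\|f\|^2$ to the Rayleigh quotient against $A(\phi(G))$, and these contributions must then be controlled globally: via the identity $A(\phi(G))=A(G)^2-dI$ every such extra edge corresponds to a unique common $G$-neighbor outside $\mathcal{N}_G(v)$, and the $(d^2-d)$-regularity of $\phi(G)$ together with 4-cycle freeness caps the number of such edges sharply enough that the Rayleigh quotient stays $\le -2$. Translated through Lemma~\ref{coro:1} this gives $(\mathrm{gap}(G))^2\le(d-2)/d^2$, contradicting the hypothesis and completing the proof.
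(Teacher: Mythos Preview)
Your spectral setup is correct and coincides with the paper's: the hypothesis is equivalent to $\lambda_{\min}(A(\phi(G)))>-2$, with $\phi(G)$ being $(d^2-d)$-regular by Lemma~\ref{lemma:phi}. From there the paper simply invokes the Doob--Cvetkovi\'c theorem: a connected regular graph with least adjacency eigenvalue greater than $-2$ is a complete graph or an odd cycle, and odd cycles are excluded since $d^2-d\ge 6$ for $d\ge3$. Your attempt to bypass this classical result via a direct test-function argument has a genuine gap.

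The difficulty is that the windmill $W_d$ sits inside $\phi(G)[\{v\}\cup\mathcal N_{\phi(G)}(v)]$ around \emph{every} vertex $v$, regardless of whether $\phi(G)$ has an induced $P_3$; your test vector $f$ makes no use of the specific missing edge $\{u,w\}$. In the extremal situation where a component of $\phi(G)$ is the complete graph $K_{d^2-d+1}$ (as in the projective-plane case), the number $E$ of cross-petal edges inside $\mathcal N_{\phi(G)}(v)$ equals $\binom{d^2-d}{2}-d\binom{d-1}{2}$, of order $d^4$, and the Rayleigh quotient of $A(\phi(G))$ at your $f$ sits well above $-2$. The induced-$P_3$ hypothesis guarantees only that \emph{one} of these potential cross-petal edges is absent; losing a single $+2/\|f\|^2$ term (of order $d^{-2}$) cannot pull the quotient down to $-2$. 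Your proposed mechanism---that $(d^2-d)$-regularity of $\phi(G)$ together with $4$-cycle freeness of $G$ caps $E$---does not work: regularity bounds the total $\phi(G)$-degree of each petal vertex but says nothing about how many of its $\phi(G)$-neighbors lie inside versus outside $\mathcal N_{\phi(G)}(v)$, and $4$-cycle freeness of $G$ has already been fully spent in obtaining the petal decomposition (it only makes the correspondence between extra edges and common $G$-neighbors $c\notin\mathcal N_G(v)\cup\{v\}$ injective, without bounding the number of such $c$). A direct argument along your lines would in effect require reproving the Doob--Cvetkovi\'c classification for regular graphs, which is not elementary.
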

\begin{proof} 
By Lemma \ref{lemma:=d-no-4-cycle}, for any $\{u,v\}\in E(\phi(G))$, $|\mathcal{N}(u)\cap\mathcal{N}(v)|=1$, and for any distinct vertices $u$ and $v$ with $\{u,v\}\not\in E(\phi(G))$, $|\mathcal{N}(u)\cap\mathcal{N}(v)|=0$. Thus,
$$\frac{\sum_{u,v\in V}{ |\mathcal{N}(u)\cap\mathcal{N}(v)|f(u)f(v)}}{\sum_{w\in V}f(w)^2}=d+2\frac{\sum_{\{u,v\}\in E(\phi(G))}{f(u)f(v)}}{\sum_{w\in V}f(w)^2}$$

It then follows from Lemma \ref{coro:1} that
\begin{align*}
\big(\mathrm{gap}(G)\big)^2&=\frac{1}{d^2}\min_{f:V\rightarrow{\mathbb{R}},f\ne\mathbf{0}}\frac{\sum_{u,v\in V}{ |\mathcal{N}(u)\cap\mathcal{N}(v)|f(u)f(v)}}{\sum_{w\in V}f(w)^2}
\\&=\frac{1}{d^2}\left(d+\min_{f:V\rightarrow{\mathbb{R}},f\ne\mathbf{0}}\frac{2\sum_{\{u,v\}\in E(\phi(G))}{f(u)f(v)}}{\sum_{w\in V}f(w)^2}\right)
\\&=\frac{d+\lambda_{\min}(\mathbf{A}(\phi(G)))}{d^2}
\end{align*}
where $\lambda_{\min}(\mathbf{A}(\phi(G)))$ represents the smallest eigenvalue of the adjacency matrix of $\phi(G)$. 
The condition $\mathrm{gap}(G)>\frac{\sqrt{d-2}}{d}$ implies $$\frac{d-2}{d^2}<\big(\mathrm{gap}(G)\big)^2=\frac{d+\lambda_{\min}(\mathbf{A}(\phi(G)))}{d^2}$$
that is, $\lambda_{\min}(\mathbf{A}(\phi(G)))>-2$. 
By Lemma \ref{lemma:phi}, $\phi(G)$ is a $(d^2-d)$-regular graph which may not be connected. Therefore, every connected component of $\phi(G)$ is a connected regular graph whose least adjacency eigenvalue is greater than $-2$. 

Recall the important result by Doob and Cvetkovi\'c \cite[Theorem 2.5]{Doob} (see also Corollary 2.3.22 in \cite{Cvetkovicbook}) saying that any connected regular graph with least adjacency eigenvalue greater than $-2$ must be a complete graph or an odd cycle. 

We then claim that every connected component of $\phi(G)$ is a complete graph or an odd cycle. However, since the degree of any vertex of $\phi(G)$ is constant $d^2-d\ge 6$, no connected component can be an odd cycle. In consequence, every connected component of $\phi(G)$ is a complete graph.
 \end{proof}

\begin{lemma}\label{lemma:=phi}Let $G$ be a $d$-regular connected graph. If $\mathrm{gap}(G)>\frac{\sqrt{d-2}}{d}$, 
then $\phi(G)$ must be a complete graph of order $d^2-d+1$, or the disjoint union of two complete graphs of order $d^2-d+1$.
\end{lemma}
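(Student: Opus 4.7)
The plan is to combine the two preceding lemmas with a parity-of-walks argument. First, I would apply Lemma \ref{lemma:k} to conclude that $\phi(G)$ is a disjoint union of complete graphs, and then Lemma \ref{lemma:phi} to conclude that $\phi(G)$ is $(d^2-d)$-regular. Since $K_n$ is $(n-1)$-regular, these two facts together force every connected component of $\phi(G)$ to be isomorphic to $K_{d^2-d+1}$. This already pins down the structure of each component; the only remaining issue is bounding the \emph{number} of components by $2$.

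For the count, the key observation is that an edge of $\phi(G)$ corresponds to a walk of length $2$ in $G$, so two vertices lie in the same component of $\phi(G)$ if and only if they are joined by some even-length walk in $G$. Since $G$ is connected by assumption, the set of pairs joined by an even walk is entirely controlled by whether or not $G$ is bipartite, and I would split into two cases accordingly.

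If $G$ is non-bipartite, then $G$ contains an odd cycle, and any odd walk between two vertices can be converted to an even one by detouring once around that cycle (using the connectedness of $G$ to reach it). Hence every pair of vertices in $V$ is joined by an even walk, $\phi(G)$ is connected, and $\phi(G) = K_{d^2-d+1}$. If $G$ is bipartite with bipartition $V = V_1 \sqcup V_2$, then even walks preserve the bipartition class, so the components of $\phi(G)$ are exactly $V_1$ and $V_2$; each being a copy of $K_{d^2-d+1}$ forces $|V_1| = |V_2| = d^2-d+1$, yielding the disjoint-union conclusion.

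I do not expect any step here to be a serious obstacle — all the spectral heavy lifting has been done in Lemmas \ref{lemma:=d-no-4-cycle}, \ref{lemma:phi}, and \ref{lemma:k}. The only subtlety worth being careful about is the walk-parity characterization of components of $\phi(G)$ and its interplay with the connectedness of $G$, which is elementary but must be stated precisely to rule out the possibility of three or more components.
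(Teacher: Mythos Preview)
Your proposal is correct and follows essentially the same route as the paper: invoke Lemma~\ref{lemma:k} (union of complete graphs) and Lemma~\ref{lemma:phi} (regularity $d^2-d$, via Lemma~\ref{lemma:=d-no-4-cycle} for the $4$-cycle-free hypothesis) to pin down each component as $K_{d^2-d+1}$, then use connectedness of $G$ to cap the number of components at two. The only cosmetic difference is in that last step: the paper picks an edge $\{u,v\}\in E(G)$ and traces neighbors to show either all of $V$ collapses into one component or $G$ is bipartite with parts equal to the two components, whereas you phrase the same argument via the even-walk characterization of $\phi(G)$-components and a bipartite/non-bipartite dichotomy (this is exactly the content of Proposition~\ref{pro:non-bipartite=phiG-connected}, which the paper proves separately in the appendix).
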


\begin{proof}
By Lemma \ref{lemma:k}, each connected component of $\phi(G)$ must be a complete graph. And by Lemma \ref{lemma:phi}, each of these complete graphs must be of order $d^2-d+1$.   Without loss of generality, we assume $\phi(G)=K_1\bigsqcup\cdots\bigsqcup K_m$ with each $K_t$ being a complete graph of order $d^2-d+1$. For any edge $\{u,v\}\in E$, suppose that $u\in K_t$ and $v\in K_s$ for some $t,s\in \{1,\cdots,m\}$. 

Case $t=s$: We first claim that for any $w\sim u$, $w\in V(K_t)$. If not, then $u\in\mathcal{N}(v)\cap\mathcal{N}(w)$ implying that $\{v,w\}\in E(\phi(G))$ which contradicts $v\in V(K_s)=V(K_t)$ and $w\notin V(K_t)$. For the same reason, every $w'\sim v$ satisfies $w'\in K_t$. Then, the connectedness of $G$ implies that any vertex lies in $K_t$. Hence, $V(K_t)=V$ and $m=1$.

Case $t\ne s$: We claim that for any $w\sim u$, $w\in V(K_s)$. Suppose the contrary, that there is $w\sim u$ with $w\notin V(K_s)$. Then $u\in\mathcal{N}(v)\cap\mathcal{N}(w)$ and thus $\{v,w\}\in E(\phi(G))$, but this  contradicts $v\in V(K_s)$ and $w\notin V(K_s)$. 
Similarly, for any $w'\sim v$, $w'\in V(K_t)$. Finally, by the connectedness of $G$, it is not difficult to see that any edge of $G$ has an endpoint in $K_t$ and the other endpoint in $K_s$. Therefore, $\phi(G)$ is the disjoint union of two complete graphs of order $d^2-d+1$, and in this case, we have $m=2$.
\end{proof}

%We have established all the necessary conditions for $G$. 

\begin{lemma}\label{lemma:phi(G)}Let $G$ be a 4-cycle free $d$-regular connected graph. Assume that $\phi(G)$ is a complete graph of order $d^2-d+1$, or the disjoint union of two complete graphs of order $d^2-d+1$. Then $\mathrm{gap}(G)=\frac{\sqrt{d-1}}{d}$.
\end{lemma}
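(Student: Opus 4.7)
The proof reduces to spectral bookkeeping, since the core identity has already been produced inside the proof of Lemma \ref{lemma:k}. The plan is to isolate that identity, then evaluate the adjacency spectrum of $\phi(G)$ directly from its assumed structure.

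\textbf{Step 1.} I would extract from the computation in Lemma \ref{lemma:k} the exact equality
\[
(\mathrm{gap}(G))^2 = \frac{d + \lambda_{\min}(\mathbf{A}(\phi(G)))}{d^2},
\]
valid whenever $G$ is a 4-cycle free $d$-regular graph. Although Lemma \ref{lemma:k} is phrased under the additional hypothesis $\mathrm{gap}(G) > \sqrt{d-2}/d$, that hypothesis is used only to invoke Lemma \ref{lemma:=d-no-4-cycle} and thereby force $|\mathcal{N}(u)\cap\mathcal{N}(v)| \in \{0,1\}$ for distinct $u,v$. The displayed identity itself follows from this intersection bound together with $d$-regularity via Lemma \ref{coro:1} (the diagonal contributes $d$ and the off-diagonal matches the adjacency matrix of $\phi(G)$). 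Both of these ingredients are present as hypotheses in Lemma \ref{lemma:phi(G)}, so the identity applies verbatim.

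\textbf{Step 2.} I would compute $\lambda_{\min}(\mathbf{A}(\phi(G)))$ from the assumed structure of $\phi(G)$. Recall that the adjacency spectrum of the complete graph $K_m$ is $\{m-1, -1, \ldots, -1\}$ with $-1$ of multiplicity $m-1$, so for $m = d^2 - d + 1$ its least eigenvalue equals $-1$. Since the adjacency spectrum of a disjoint union is the multiset union of the component spectra, the disjoint union of two copies of $K_{d^2-d+1}$ also has least eigenvalue $-1$. In either of the two cases permitted by the lemma's hypothesis, we obtain $\lambda_{\min}(\mathbf{A}(\phi(G))) = -1$.

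\textbf{Step 3.} Substituting back gives
\[
(\mathrm{gap}(G))^2 = \frac{d-1}{d^2},
\]
and taking the positive square root yields $\mathrm{gap}(G) = \sqrt{d-1}/d$, as claimed.

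There is no essential obstacle; this lemma is a consolidation step whose role is to record the equality $\mathrm{gap}(G) = \sqrt{d-1}/d$ for all graphs whose neighborhood graph has the structure isolated in Lemma \ref{lemma:=phi}. The only point requiring genuine care is verifying that the Rayleigh-quotient identity from Lemma \ref{lemma:k} survives after dropping the strict spectral-gap hypothesis, which a quick reinspection of that derivation confirms.
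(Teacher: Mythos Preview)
Your proposal is correct and follows essentially the same approach as the paper: both start from the Rayleigh-quotient formula of Lemma \ref{coro:1}, use 4-cycle-freeness to reduce $|\mathcal{N}(u)\cap\mathcal{N}(v)|$ to $\{0,1\}$, and then minimize the resulting quadratic form. The only cosmetic difference is that you package the minimization as the known fact $\lambda_{\min}(\mathbf{A}(K_m))=-1$ (reusing the identity already derived in the proof of Lemma \ref{lemma:k}), whereas the paper completes the square directly to write the form as $(d-1)\sum_i f(u_i)^2 + \big(\sum_i f(u_i)\big)^2$.
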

\begin{proof}
The 4-cycle free condition means that for any distinct vertices $u,v\in V$,  $|\mathcal{N}(u)\cap\mathcal{N}(v)|\le1$. Thus $\{u,v\}\in E(\phi(G))$ if and only if $|\mathcal{N}(u)\cap\mathcal{N}(v)|=1$. 

If $\phi(G)$ is a complete graph of order $d^2-d+1$, we can then assume $V(\phi(G))=V=\{u_1,\cdots,u_{d^2-d+1}\}$. By Lemma \ref{coro:1}, we have
\begin{align*}
    \big(\mathrm{gap}(G)\big)^2&=\frac{1}{d^2}\min_{f:V\rightarrow{\mathbb{R}},f\ne\mathbf{0}}\frac{\sum_{u,v\in V}{ |\mathcal{N}(u)\cap\mathcal{N}(v)|f(u)f(v)}}{\sum_{w\in V}f(w)^2}\\&=\frac{1}{d^2}\min_{f:V\rightarrow{\mathbb{R}},f\ne\mathbf{0}}\frac{d\sum_{i}{ f(u_i)^2}+2\sum_{i<j}f(u_i)f(u_j)}{\sum_{i}f(u_i)^2}\\&=\frac{1}{d^2}\Big(d-1+\min_{f:V\rightarrow{\mathbb{R}},f\ne\mathbf{0}}\frac{\big(\sum_{i}{ f(u_i)}\big)^2}{\sum_{i}f(u_i)^2}\Big)\\&=\frac{d-1}{d^2}.
\end{align*}
In consequence, $\mathrm{gap}(G)=\frac{\sqrt{d-1}}{d}$.

If $\phi(G)$ is the disjoint union of two complete graphs of order $d^2-d+1$, we can similarly assume that the vertex sets of the two complete graphs are $\{u_1,\cdots,u_{d^2-d+1}\} $ and $\{v_1,\cdots,v_{d^2-d+1}\}$, respectively. By Lemma \ref{coro:1}, we have
\begin{align*}
    \big(\mathrm{gap}(G)\big)^2&=\frac{1}{d^2}\min_{f:V\rightarrow{\mathbb{R}},f\ne\mathbf{0}}\frac{\sum_{u,v\in V}{ |\mathcal{N}(u)\cap\mathcal{N}(v)|f(u)f(v)}}{\sum_{w\in V}f(w)^2}\\&=\frac{1}{d^2}\min_{f:V\rightarrow{\mathbb{R}},f\ne\mathbf{0}}\frac{d\sum\limits_{i}{ \big(f(u_i)^2+f(v_i)^2\big)}+2\sum\limits_{i<j}\big(f(u_i)f(u_j)+f(v_i)f(v_j)\big)}{\sum\limits_{i}(f(u_i)^2+f(v_i)^2)}\\&=\frac{1}{d^2}\Big(d-1+\min_{f:V\rightarrow{\mathbb{R}},f\ne\mathbf{0}}\frac{\big(\sum_{i}{ f(u_i)}\big)^2+\big(\sum_{i}{ f(v_i)}\big)^2}{\sum_{i}\big(f(u_i)^2+f(v_i)^2\big)}\Big)\\&=\frac{d-1}{d^2}.
\end{align*}
Consequently, $\mathrm{gap}(G)=\frac{\sqrt{d-1}}{d}$.
\end{proof}

It follows from Lemmas \ref{lemma:=d-no-4-cycle}, \ref{lemma:=phi} and \ref{lemma:phi(G)} that for a connected $d$-regular graph $G$, the following conditions are equivalent:
\begin{itemize}
\item $\mathrm{gap}(G)=\frac{\sqrt{d-1}}{d}$
\item $\mathrm{gap}(G)>\frac{\sqrt{d-2}}{d}$
\item $G$ is 4-cycle free, and $\phi(G)$ is the complete graph of order $d^2-d+1$, or the disjoint union of two complete graphs, each of them has order $d^2-d+1$.
\end{itemize}

We shall start from the last condition to explore more on the combinatorial characterization of $G$. 

\begin{lemma}\label{lemma:d=2}
Let $G$ be a 4-cycle free $d$-regular connected graph. Assume that $\phi(G)$ is a complete graph of order $d^2-d+1$. Then $d=2$.
\end{lemma}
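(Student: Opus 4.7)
The plan is to recognize that the two standing assumptions on $G$ combine to give the hypothesis of the classical friendship theorem. Since $G$ is 4-cycle free and $d$-regular, any two distinct vertices have \emph{at most} one common neighbor. On the other hand, $\phi(G)$ being the complete graph on $V(G)$ means, by the definition of $\phi$, that any two distinct vertices have \emph{at least} one common neighbor. Putting these together, every pair of distinct vertices of $G$ has exactly one common neighbor.

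This is precisely the hypothesis of the Erd\H{o}s--R\'enyi--Sós friendship theorem, which asserts that a finite graph in which every two vertices have exactly one common neighbor must be a \emph{windmill graph} $F_k$ (i.e., $k$ triangles sharing a single common vertex), in particular with $2k+1$ vertices. So my proposed proof is simply to quote (or give a short independent argument for) the friendship theorem applied to $G$, and then compare the resulting structure with the assumption that $G$ is $d$-regular.

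In $F_k$ the central vertex has degree $2k$ while every other vertex has degree $2$. For $F_k$ to be regular one must therefore have $2k = 2$, that is $k = 1$ and $G = K_3$, which corresponds to $d = 2$. This is the desired conclusion. As a consistency check one may also double count paths of length $2$: the vertex count $n = d^2 - d + 1$ guaranteed by Lemma \ref{lemma:phi} together with the identity $n\binom{d}{2} = \binom{n}{2}$ (each pair of vertices produces exactly one length-$2$ path through a unique common neighbor) is automatically satisfied, so no contradiction arises from cardinalities alone and the rigidity really does come from the friendship theorem.

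The main obstacle is purely expository: one must justify invoking the friendship theorem cleanly. If one prefers a self-contained argument, the standard proof via spectral analysis of the adjacency matrix $\mathbf{A}(G)$ (using that $\mathbf{A}(G)^2 = (d-1)\mathbf{I} + \mathbf{J}$ when every two vertices share exactly one common neighbor, and then examining the integrality of the eigenvalues of $\mathbf{A}(G)$) also applies verbatim here and forces $d = 2$. Either route dispenses with the lemma.
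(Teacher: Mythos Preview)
Your argument is correct. The observation that ``4-cycle free'' gives $|\mathcal{N}(u)\cap\mathcal{N}(v)|\le 1$ while ``$\phi(G)$ complete'' gives $|\mathcal{N}(u)\cap\mathcal{N}(v)|\ge 1$, so that every two vertices have \emph{exactly} one common neighbor, is precisely the hypothesis of the Erd\H{o}s--R\'enyi--S\'os friendship theorem, and the regularity assumption then forces the windmill to be $K_3$.

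The paper takes a different-looking but ultimately equivalent route: it does not name the friendship theorem, but instead carries out its standard spectral proof in normalized-Laplacian language. From the structure of $\phi(G)$ the paper reads off the entire spectrum of $\mathbf{M}=(\mathbf{I}-\mathbf{D}^{1/2}\mathbf{L}\mathbf{D}^{-1/2})^2$ (this is exactly your relation $\mathbf{A}^2=(d-1)\mathbf{I}+\mathbf{J}$, rescaled by $d^{-2}$), deduces that $\mathbf{L}$ has eigenvalues $0,\,1\pm\sqrt{d-1}/d$, and then uses the trace identity to obtain $(d^2-d-2r)\sqrt{d-1}=d$, forcing $\sqrt{d-1}\in\mathbb{Z}$ and hence $d=2$. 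So your ``self-contained alternative'' via $\mathbf{A}^2=(d-1)\mathbf{I}+\mathbf{J}$ is literally the paper's computation transported from $\mathbf{L}$ to $\mathbf{A}$ (harmless, since $G$ is regular). What your framing buys is conceptual clarity and a one-line proof once the friendship theorem is granted; what the paper's version buys is self-containment and consistency with the surrounding Laplacian machinery.
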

\begin{proof}
Following the proof of Lemma \ref{lemma:phi(G)}, if $\phi(G)$ is a complete graph on the vertices $u_1,\cdots,u_{d^2-d+1}$, then 
$$\big(\mathrm{gap}(G)\big)^2=\frac{1}{d^2}\Big(d-1+\min_{f:V\rightarrow{\mathbb{R}},f\ne\mathbf{0}}\frac{(\sum_{i}{ f(u_i)})^2}{\sum_{i}f(u_i)^2}\Big)=\frac{d-1}{d^2}.$$
%In this case, the set of minimizers of $\frac{(\sum_{i}{ f(A_i)})^2}{\sum_{i}f(A_i)^2}$ forms a 
Note that the linear subspace $\{f:\sum_i f(u_i)=0\}$ is of dimension $(d^2-d+1)-1=d^2-d$. Therefore, the multiplicity of the eigenvalue $\frac{d-1}{d^2}$ of the matrix $\mathbf{M}:=(\mathbf{I}-\mathbf{D}^\frac{1}{2}\mathbf{L} \mathbf{D}^{-\frac{1}{2}} )^2$ is $d^2-d$. 

This implies that both $1-\frac{\sqrt{d-1}}{d}$ and $1+\frac{\sqrt{d-1}}{d}$ are eigenvalues of $\mathbf{L}$, and the sum of their multiplicities is $d^2-d$. Without loss of generality, we may assume that the multiplicity of the eigenvalue $1-\frac{\sqrt{d-1}}{d}$ is $r$. Together with the fact that $0$ is always an eigenvalue of $\mathbf{L}$ with multiplicity one, we finally determine all the eigenvalues of $\mathbf{L}$, which are $0$, $1-\frac{\sqrt{d-1}}{d}$, $1+\frac{\sqrt{d-1}}{d}$, with their multiplicities $1$, $r$ and $d^2-d-r$, respectively. 
Note that the sum of all the eigenvalues of 
$\mathbf{L}$ is the number of vertices, that is, $d^2-d+1$. Therefore, 
$$r(1-\frac{\sqrt{d-1}}{d})+(d^2-d-r)(1+\frac{\sqrt{d-1}}{d})=d^2-d+1$$
which reduces to $$(d^2-d-2r)\frac{\sqrt{d-1}}{d}=1.$$
Thus, 
$\sqrt{d-1}$ is a rational number, and hence $d=m^2+1$ for some positive integer $m$. We then obtain $$(d^2-d-2r)m=m^2+1$$ which yields $m|1$, and in consequence, $m=1$, i.e., $d=2$.
\end{proof}

We are in a position to prove Theorem \ref{thm:d-graph-projective}. Suppose that $\phi(G)$ is the disjoint union of two complete graphs, in which one of them has the vertex set $\{u_1,\cdots,u_{d^2-d+1}\}$, and the other has the vertex set $\{v_1,\cdots,v_{d^2-d+1}\}$. Based on the proof of Lemma \ref{lemma:=phi}, all the edges are of the form $\{u_i,v_j\}$, i.e., $G$ is a bipartite graph. %Denote by $K_i$ the complete graph on the vertex set $\mathcal{N}(B_i)$. 
Since $G$ is 4-cycle free, we have $|\mathcal{N}(v_i)\cap\mathcal{N}(v_j)|\le1$ whenever  $i\ne j$. %So, $K_i$ and $K_j$ have no common edges. 
On the other hand, since $\{v_i,v_j\}\in E(\phi(G))$, we have $|\mathcal{N}(v_i)\cap\mathcal{N}(v_j)|\ge1$. Therefore, $|\mathcal{N}(v_i)\cap\mathcal{N}(v_j)|=1$, and similarly, $|\mathcal{N}(u_i)\cap\mathcal{N}(u_j)|=1$, whenever $i\ne j$. By viewing $\{u_1,\cdots,u_{d^2-d+1}\}$ as points and regarding $\{v_1,\cdots,v_{d^2-d+1}\}$ as lines, we get a finite projective plane of order $d-1$. And it is easy to check that $G$ is indeed the incidence graph of such a finite projective plane. 

For the case that $\phi(G)$ is a complete graph of order $d^2-d+1$, it follows from Lemma \ref{lemma:d=2} that $d=2$, and in this case, $G$ must be a cycle of order 3 or 6. 

Finally, we derive the following proposition.
\begin{prop}\label{prop:regular-normalized}
For any $d$-regular connected graph $G$ with $d\ge 3$, the following conditions are equivalent:
\begin{itemize}
\item $\mathrm{gap}(G)=\frac{\sqrt{d-1}}{d}$
\item $\mathrm{gap}(G)>\frac{\sqrt{d-2}}{d}$
\item $G$ is the incidence graph of a finite projective plane of order $d-1$ 
\end{itemize}    
\end{prop}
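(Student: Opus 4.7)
The plan is to assemble the equivalence directly from the structural lemmas already proved, then upgrade the purely combinatorial picture to the incidence-geometry one. First, I would invoke Lemmas \ref{lemma:=d-no-4-cycle}, \ref{lemma:=phi}, and \ref{lemma:phi(G)}, which collectively establish that, for a connected $d$-regular $G$, the two conditions $\mathrm{gap}(G)=\frac{\sqrt{d-1}}{d}$ and $\mathrm{gap}(G)>\frac{\sqrt{d-2}}{d}$ are each equivalent to the combinatorial statement that $G$ is $4$-cycle free and $\phi(G)$ is either a single copy of $K_{d^2-d+1}$ or a disjoint union of two such complete graphs. Lemma \ref{lemma:d=2} then rules out the single-component alternative for $d\ge 3$, so only the two-component case survives under our hypothesis.

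Next I would translate this two-component picture into the projective-plane axioms. Label the two components of $\phi(G)$ by vertex sets $U=\{u_1,\dots,u_{d^2-d+1}\}$ and $V=\{v_1,\dots,v_{d^2-d+1}\}$. Repeating the case analysis already performed inside the proof of Lemma \ref{lemma:=phi} shows that every edge of $G$ must run between $U$ and $V$, so $G$ is bipartite with parts $(U,V)$. For any distinct $u_i,u_j\in U$, the edge $\{u_i,u_j\}\in E(\phi(G))$ forces $|\mathcal{N}(u_i)\cap\mathcal{N}(u_j)|\ge 1$, while $4$-cycle freeness gives $|\mathcal{N}(u_i)\cap\mathcal{N}(u_j)|\le 1$, hence exactly $1$; the same reasoning applies to pairs in $V$. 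Declaring $U$ to be the points and $V$ to be the lines with incidence given by adjacency in $G$ then verifies axioms (P1) and (P2); the $d$-regularity of $G$ ensures that each line contains exactly $d=(d-1)+1$ points, so the plane has order $d-1$. Axiom (P3) is a standard nondegeneracy check that follows from (P1), (P2), and the fact that every line has $d\ge 3$ points.

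For the converse, suppose $G$ is the incidence graph of a finite projective plane of order $d-1$. The proposition attributed to Godsil recalled in Section~\ref{sec:projective} gives $\sigma(\mathbf{A}(G))=\{\pm d,\pm\sqrt{d-1}\}$; since $G$ is $d$-regular, the normalized Laplacian satisfies $\mathbf{L}=\mathbf{I}-\tfrac{1}{d}\mathbf{A}$, so $\sigma(\mathbf{L})=\bigl\{0,\,2,\,1\pm\tfrac{\sqrt{d-1}}{d}\bigr\}$ and consequently $\mathrm{gap}(G)=\tfrac{\sqrt{d-1}}{d}$. This closes the loop among the three conditions.

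The main obstacle, as I see it, is not in the spectral bookkeeping but in the passage from the structure of $\phi(G)$ to a bona fide projective plane: one must be careful to check that bipartiteness is actually forced by the two-component structure (not merely compatible with it), and that the nondegeneracy axiom (P3) is not lost along the way. Both points are handled by the arguments above, the former inheriting from Lemma \ref{lemma:=phi} and the latter reducing to a standard counting observation once each line is known to carry at least three points.
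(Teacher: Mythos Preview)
Your proposal is correct and follows essentially the same route as the paper: you assemble Lemmas \ref{lemma:=d-no-4-cycle}, \ref{lemma:=phi}, \ref{lemma:phi(G)}, and \ref{lemma:d=2} to reduce to the two-component case of $\phi(G)$, then read off the projective-plane axioms from the forced bipartiteness together with $|\mathcal{N}(u_i)\cap\mathcal{N}(u_j)|=1$, exactly as the paper does in the paragraphs immediately preceding the proposition. The only cosmetic difference is that for the implication $(3)\Rightarrow(1)$ you invoke the known adjacency spectrum of incidence graphs directly, whereas the paper leaves this direction implicit (it follows equally well by noting that an incidence graph satisfies the combinatorial hypothesis of Lemma \ref{lemma:phi(G)}).
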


The proof of Theorem \ref{thm:d-graph-projective} is then completed.

\subsection{Proof of Theorem \ref{th:main}}

For the case of connected $d$-regular graph, Theorem \ref{th:main} follows from Proposition \ref{prop:regular-normalized} and the fact that $\min\limits_{\lambda\in \sigma(\mathbf{A})}|\lambda|=d\cdot \mathrm{gap}(G)$. 

The rest of the proof is a detailed analysis for non-regular graphs.  It suffices to prove that for any non-regular graph $G\in\mathcal{G}_{\le d}$, 
\begin{equation}\label{eq:nonregular-d-2}
\min_{\lambda\in \sigma(\mathbf{A})}|\lambda|\le \sqrt{d-2}.
\end{equation}
%with $$\min_{\lambda\in \sigma(\mathbf{A})}|\lambda|>\sqrt{d-2}$$ $G$ is the incidence graph of a finite projective plane of order $d-1$. 
Our approach is based on spectral interactions between $G$ and its neighborhood graph $\phi(G)$.
\begin{prop}\label{prop:minimum-degree-adjacency}
For any graph $G$, 
\begin{equation}\label{eq:adjacency-min-degree}
\min_{\lambda\in \sigma(\mathbf{A})}|\lambda|\le \sqrt{\min\limits_{u\in V}\deg u}    
\end{equation}
%And if any component of $G$ is not isomorphic to $K_2$, then $\le$ can be replaced by $<$.
with equality if and only if $G$ has a component that is isomorphic to $K_2$ or $K_1$.
\end{prop}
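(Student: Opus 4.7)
The plan is to pass to the positive semidefinite matrix $\mathbf{A}^2$, whose spectrum is $\{\lambda^2:\lambda\in\sigma(\mathbf{A})\}$, so that $(\min_{\lambda\in\sigma(\mathbf{A})}|\lambda|)^2=\lambda_{\min}(\mathbf{A}^2)$. Writing $\delta:=\min_{u\in V}\deg u$, the inequality \eqref{eq:adjacency-min-degree} then follows from the Rayleigh-Ritz variational principle applied to the indicator vector $e_v$ of any vertex $v$ realizing $\deg v=\delta$: since $(\mathbf{A}^2)_{vv}$ counts closed walks of length $2$ at $v$ and equals $\deg v=\delta$, the Rayleigh quotient at $e_v$ is $\delta$, yielding $\lambda_{\min}(\mathbf{A}^2)\le\delta$ and hence $\min_\lambda|\lambda|\le\sqrt{\delta}$.

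For the forward direction of the equality characterization, if $\min_\lambda|\lambda|=\sqrt{\delta}$ then $e_v$ attains the infimum in the Rayleigh quotient, so it must lie in the $\delta$-eigenspace of $\mathbf{A}^2$. Writing $(\mathbf{A}^2 e_v)_u=\sum_w A_{uw}A_{wv}=|\mathcal{N}(u)\cap\mathcal{N}(v)|$ for each $u$, the eigenvector equation $\mathbf{A}^2 e_v=\delta e_v$ forces $|\mathcal{N}(u)\cap\mathcal{N}(v)|=0$ for every $u\ne v$, i.e., $v$ shares no neighbor with any other vertex. Equivalently, every neighbor $w$ of $v$ satisfies $\mathcal{N}(w)=\{v\}$ and therefore $\deg w=1$. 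A short case analysis on $\delta$ then identifies the extremal structure: $\delta=0$ makes $v$ isolated and $\{v\}$ a $K_1$ component; $\delta=1$ makes $v$'s unique neighbor $w$ of degree $1$, so $\{v,w\}$ is a $K_2$ component; and $\delta\ge 2$ is impossible, since the neighbors of $v$ would have degree $1<\delta$, contradicting the minimum-degree hypothesis.

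For the reverse direction, an isolated vertex immediately provides both $\delta=0$ and $0\in\sigma(\mathbf{A})$, so the bound is saturated as $0=\sqrt{0}$; a $K_2$ component in turn forces $\delta\le 1$ and places $\pm 1$ into $\sigma(\mathbf{A})$ via the block-diagonal decomposition of $\mathbf{A}$ across connected components, matching the upper bound $\sqrt{\delta}=1$. The main technical step is the second paragraph, where the variational equality has to be translated into the sharp local combinatorial statement that every neighbor of a minimum-degree vertex has degree $1$; all other steps are routine bookkeeping and case analysis.
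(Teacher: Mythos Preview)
Your argument for the inequality and for the forward (``only if'') direction of the equality case is correct and essentially identical to the paper's: both pass to $\mathbf{A}^2$, evaluate the Rayleigh quotient at the indicator $e_v$ of a minimum-degree vertex, and then read off the eigenvector equation $\mathbf{A}^2 e_v=\delta e_v$ coordinatewise to conclude that $v$ shares no common neighbor with any other vertex, so every neighbor of $v$ has degree $1$. Your case analysis on $\delta\in\{0,1,\ge 2\}$ is slightly more explicit than the paper's one-line conclusion that ``the component containing $u$ is $K_2$ or the singleton $K_1$'', but the content is the same.

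Your reverse (``if'') direction, however, has a genuine gap in the $K_2$ case. From a $K_2$ component you correctly deduce $\delta\le 1$ and $\pm 1\in\sigma(\mathbf{A})$, but this only gives $\min_\lambda|\lambda|\le 1$, not $\min_\lambda|\lambda|=1$: nothing prevents another component from contributing an eigenvalue of strictly smaller absolute value. Concretely, $G=K_2\sqcup P_3$ has a $K_2$ component and $\delta=1$, yet $P_3$ contributes the eigenvalue $0$, so $\min_\lambda|\lambda|=0<1=\sqrt{\delta}$ and equality fails. Thus the ``if'' direction of the stated equivalence is actually false for general disconnected graphs, and no argument could close this gap. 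The paper's own proof addresses only the forward direction and does not attempt the converse; in its sole application (to connected graphs in $\mathcal{G}_{\le d}$, hence on at least three vertices) the equality case cannot occur and only the strict inequality is needed.
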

\begin{proof}
    
Similar to Lemma \ref{coro:1}, we have 
\begin{equation}\label{eq:adjacency-min-ng}
\big(\min_{\lambda\in \sigma(\mathbf{A})}|\lambda|\big)^2=\min_{f:V\rightarrow{\mathbb{R}},f\ne\mathbf{0}} \mathcal{R}_{\mathbf{A}^2}(f)
\end{equation}
where $$\mathcal{R}_{\mathbf{A}^2}(f):=\frac{\sum_{u,v\in V}{ |\mathcal{N}(u)\cap\mathcal{N}(v)|f(u)f(v)}}{\sum_{w\in V}f(w)^2}.$$
Take $f_u:V\to\mathbb{R}$ defined as $f_u(u)=1$ and $f_u(v)=0$ whenever $v\ne u$. Then, it follows from \eqref{eq:adjacency-min-ng} that for any $u\in V$, $
\big(\min_{\lambda\in \sigma(\mathbf{A})}|\lambda|\big)^2\le \mathcal{R}_{\mathbf{A}^2}(f_u)=\deg(u)$ and thus we obtain \eqref{eq:adjacency-min-degree}.

%In fact, $\le$ can be replaced by $<$ if $G$ has no component $K_2$. 
We now focus on the equality case. Suppose that \eqref{eq:adjacency-min-degree} holds with equality. 
Then, there exists a vertex with minimum degree $\deg u=\min_{v\in V}\deg v$, and $f_u$ is an eigenvector of $A^2$, i.e., $A^2f_u=\deg (u)\, f_u$ but this implies that $u$ has no neighbor in $\phi(G)$, meaning that the component containing $u$ is $K_2$ or the singleton $K_1$. 
\end{proof}

We remark here that Proposition \ref{prop:minimum-degree-adjacency} is a generalization of Theorem 5 in \cite{Mohar15}.

\vspace{0.2cm}

We are now ready to prove \eqref{eq:nonregular-d-2}. 
Suppose the contrary, that there exists a non-regular graph 
%Since we assume that 
$G\in\mathcal{G}_{\le d}$ satisfying $\min\limits_{\lambda\in \sigma(\mathbf{A})}|\lambda|>\sqrt{d-2}$. Proposition \ref{prop:minimum-degree-adjacency} immediately implies that $\deg u\in \{d-1,d\}$ for any $u\in V$.

Similar to the inequality \eqref{eq:repeat-key-opti-d}, it is easy to see
\begin{equation}\label{eq:2dN(u)N(v)}
2d\ge\deg u+\deg v> 2d+2|\mathcal{N}(u)\cap \mathcal{N}(v)|-4
\end{equation}
whenever $u\ne v$. This implies that for any $w\in V$, there is at most one $v\in \mathcal{N}(w)$ with $\deg v=d-1$. 

The inequality \eqref{eq:2dN(u)N(v)} also yields $|\mathcal{N}(u)\cap \mathcal{N}(v)|\le 1$ for any distinct $u$ and $v$, and thus, %we have the following fact.
%Fact 2: 
$G$ is 4-cycle free.

%Proof: It suffices to show $|\mathcal{N}(u)\cap \mathcal{N}(v)|\le 1$ for any distinct $u$ and $v$.

Argument 1: If $T$ is an induced subtree of $\phi(G)$, then there is at most one vertex $v\in V(T)$ with $\deg v=d-1$. 

Proof of Argument 1: 
Since every tree is bipartite, there exists a test function $f_T:V\to\{-1,0,1\}$ %with  $f_T(V(T))\subset\{-1,1\}$ 
such that $f_T(v)f_T(u)=-1$ whenever $u$ and $v$ are adjacent in $T$, and $f_T(w)=0$ for any $w\not\in V(T)$. Then 
$$\mathcal{R}_{\mathbf{A}^2}(f_T)=\frac{\sum_{v\in V(T)}\deg v-2(|V(T)|-1)}{|V(T)|}>d-2$$
which implies $\sum\limits_{v\in V(T)}\deg v+2>d|V(T)|$. Since $\deg v\in\{d-1,d\}$, we have $$|\{v\in V(T):\deg v=d-1\}|\le 1$$ which completes the proof of Argument 1.

\vspace{0.1cm}
%Thus, $\sum_{v\in V(T)}\deg v+2>d|V(T)|$, meaning that
%By Argument 1, there is at most one vertex $v\in V(T)$ with $\deg v=d-1$. 
Now, if there exist two vertices $u$ and $v$ in a connected component of $\phi(G)$ with $\deg u=\deg v=d-1$, then the shortest path $T$ in $\phi(G)$ connecting $u$ and $v$ is an induced subtree, which contradicts Argument 1. Therefore, every connected component of $\phi(G)$ has at most one vertex $v$ with $\deg v=d-1$. 

The remainder is a slight modification to the proof of Theorem 3 in \cite{Mohar15}. 
Since $G$ is 4-cycle free, for a vertex $v$ with $\deg v=d-1$, there holds $\deg_{\phi(G)}(v)=\sum_{u\in \mathcal{N}(v)}\deg u-\deg(v)\in\{(d-1)^2-1,\,(d-1)^2\}$, and similarly, $\deg_{\phi(G)}(u)=d(d-1)$ or $d(d-1)-1$ for any other vertex $u$ in the component of $\phi(G)$ containing $v$.

If $d\ge 4$, then $\phi(G)$ has at least 9 vertices, and thus by \cite[Theorem 2.1]{Doob} or \cite[Theorem 2.3.20]{Cvetkovicbook}, $\phi(G)$ must be the line graph of a tree, or the line graph of a (multi-)graph formed by adding an edge to a tree. Suppose $\phi(G)=\mathrm{Line}(P)$, where $P$ satisfies $|V(P)|\ge |E(P)|$. Since $P$ is not a cycle (otherwise $\phi(G)$ is a cycle which contradicts $d\ge 4$), there exists a vertex $\alpha\in P$ with $\deg_P \alpha=1$. Let $\beta$ be the unique neighbor of $\alpha$ in $P$, and let $x$ be the vertex in $\phi(G)$ corresponding to the edge $\alpha\beta\in E(P)$. Then we have $\deg_{\phi(G)}(x)\ge (d-1)^2-1$ implying that $\deg_P(\beta)\ge (d-1)^2$. Let $k$ be the number of vertices of degree 1 in $P$, then $k+\deg_P(\beta)+2(|V(P)|-1-k)\le \sum_{\gamma\in V(P)}\deg_P(\gamma)= 2|E(P)|\le 2|V(P)|$ and consequently, $k\ge \deg_P(\beta)-2\ge (d-1)^2- 2\ge d+3$ by $d\ge 4$. 
%Replacing $x$ by another vertex of degree 1 in $P$, still denoted by $x$, then the unique neighbor $\beta$ of $x$ in $P$ satisfies $\deg_P(\beta)\ge d(d-1)$ 

\begin{prop}\label{pro:non-bipartite=phiG-connected}
A connected graph $G$ is non-bipartite iff $\phi(G)$ is connected. %is disconnected iff $\phi(G)$ has exactly two connected components.
\end{prop}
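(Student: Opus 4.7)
The plan is to establish a single key equivalence from which both directions follow easily: two vertices $x,y\in V$ lie in the same connected component of $\phi(G)$ if and only if they are joined by a walk of even length in $G$. The forward direction I would verify by induction on the length of a $\phi(G)$-path, using the fact that a single $\phi(G)$-edge $\{u,v\}$ comes from a common $G$-neighbor $w$, yielding a length-$2$ walk $u,w,v$ in $G$; concatenating such walks along a $\phi(G)$-path produces an even-length $G$-walk. Conversely, any even $G$-walk $x=w_0,w_1,\ldots,w_{2k}=y$ factors through successive $\phi(G)$-edges $w_{2i}\sim_{\phi(G)} w_{2i+2}$, with $w_{2i+1}$ serving as the witnessing common neighbor.

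With this characterization in hand, the ``only if'' direction is immediate. If $G$ is bipartite with bipartition $V=A\sqcup B$ (both parts nonempty, since $G$ is connected with $N\ge 3$), then every walk between $A$ and $B$ has odd length, so no vertex of $A$ can be joined to any vertex of $B$ by an even walk. Hence $A$ and $B$ lie in distinct components of $\phi(G)$, so $\phi(G)$ is disconnected.

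For the ``if'' direction, I would use the odd cycle guaranteed by non-bipartiteness as a parity switch. Fix an odd cycle $C$ of length $2m+1$ in $G$ with a distinguished vertex $c\in V(C)$. Given arbitrary $x,y\in V$, connectedness of $G$ provides an $x$-$y$ walk $P$; if $\ell(P)$ is even we are done. If $\ell(P)$ is odd, I would take a $y$-$c$ path $Q$ and consider the concatenation of $P$, $Q$, one full traversal of $C$ starting and ending at $c$, and $Q$ reversed; this walk has length $\ell(P)+2\ell(Q)+(2m+1)$, whose parity is opposite to that of $\ell(P)$ and therefore even. By the equivalence, $x$ and $y$ lie in the same component of $\phi(G)$, so $\phi(G)$ is connected.

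There is no real obstacle here. The only step that requires a bit of care is the parity bookkeeping in the non-bipartite case, where one must confirm both that the auxiliary path $Q$ contributes an even length (being traversed twice) and that the odd cycle $C$ genuinely flips parity; both follow from connectedness of $G$ and the odd length of $C$, making the argument short and elementary.
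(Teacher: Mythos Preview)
Your proposal is correct and follows essentially the same approach as the paper: both arguments hinge on the observation that $\phi(G)$-connectivity corresponds to the existence of even-length walks in $G$, and both use an odd cycle as a parity switch to manufacture such a walk in the non-bipartite case. Your write-up is slightly more explicit (you state the even-walk characterization as a standalone equivalence and use it for both directions, whereas the paper handles the bipartite direction by citation and only spells out the non-bipartite one), and your labels for ``if'' and ``only if'' are swapped relative to the statement as phrased, but the mathematics is the same.
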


For readers' convenience, we provide a proof of Proposition \ref{pro:non-bipartite=phiG-connected} in the appendix.

%Let $C$ be an odd cycle of $G$, if there exist 

Thanks to Proposition \ref{pro:non-bipartite=phiG-connected}, either $\phi(G)$ is connected itself, or $\phi(G)$ has two connected components. In either case, there are at most $1+d-1=d$ vertices in each component of $\phi(G)$ with $\phi(G)$-degree less than $d(d-1)$. Since $k>d$, we can take $x\in V(P)$ with $\deg_P(x)=1$ such that the unique neighbor $y$ of $x$ in $P$ satisfies $\deg_P(y)= d(d-1)+1$. This implies that each component of $\phi(G)$ has a clique of order $d(d-1)+1$, and thus each component of $\phi(G)$ is the complete graph of order $d(d-1)+1$. Similar to the proof of Lemma \ref{lemma:phi(G)}, we have confirmed the case $d\ge 4$ of Theorem \ref{th:main}. 

The remaining case $d=3$ directly follows from a very recent progress on subcubic graphs. Precisely, the main theorem in \cite{Acharya} states that $R(G)\le 1$ for any chemical graph $G$ except for the Heawood graph. Since $\min_{\lambda\in \sigma(\mathbf{A})}|\lambda|\le R(G)$, the case of $d=3$ is a direct consequence of \cite[Theorem 1.1]{Acharya}.

\begin{remark}\label{remark:th1-equivalent}
We point out %propose a remark
that Theorem \ref{th:main} is essentially equivalent to \cite[Theorem 4]{Mohar15}. 
Since the spectrum of a bipartite graph is origin-symmetric, the absolute value of the mean eigenvalue is equal to $\min_{\lambda\in \sigma(\mathbf{A})}|\lambda|$. This means that Theorem \ref{th:main} formally generalizes \cite[Theorem 4]{Mohar15}. 

On the other hand, \cite[Theorem 4]{Mohar15} also implies Theorem \ref{th:main}. In fact, given a non-bipartite graph $G$, take its bipartite double cover, say $\widetilde{G}$. If $G$ has no eigenvalues in some origin-symmetric interval, then neither does the bipartite graph $\widetilde{G}$. We can then use \cite[Theorem 4]{Mohar15} to derive Theorem \ref{th:main}.
\end{remark}

\subsection{Proof of Proposition \ref{thm:girth7}}
Since $G$ is a $d$-regular graph of girth at least 7, $G$ must be 4-cycle free. According to Lemma \ref{lemma:phi}, the neighborhood graph $\phi(G)$ %of $G$ 
is $(d^2-d)$-regular. %Rewriting the Rayleigh quotient$$\mathcal{R}_{\mathbf{A}^2}(f):=d+\frac{2\sum_{\{u,v\}\in E(\phi(G))}{ f(u)f(v)}}{\sum_{w\in V}f(w)^2}$$
We then have for any $f$,
$$\min_{\lambda\in \sigma(\mathbf{A})}|\lambda|^2\le\mathcal{R}_{\mathbf{A}^2}(f)= d+\frac{2\sum_{\{u,v\}\in E(\phi(G))}{ f(u)f(v)}}{\sum_{w\in V}f(w)^2}.$$
Fixed a vertex $u\in V$, let $v_1,\cdots,v_d$  denote the neighbors of $u$ in $G$. Suppose $\mathcal{N}(v_i)=\{u,w_{i,1},\cdots,w_{i,d-1}\}$. Since $G$ is 4-cycle free, the vertices $w_{i,j}$ (for $i=1,\cdots,d$ and $j=1,\cdots,d-1$) are pairwise distinct. Moreover, for any $i$, $\{u,w_{i,1},\cdots,w_{i,d-1}\}$ forms a clique in the neighborhood graph $\phi(G)$. 
Since the girth of $G$ is at least 7, $w_{i,j}$ is not adjacent to $w_{i',j'}$ whenever $i\ne i'$, otherwise there exists a cycle of order 6 in $G$. 
Therefore, the subgraph of $\phi(G)$ induced by $\{u,w_{i,j}:i\in\{1,\cdots,d\},~j\in\{1,\cdots,d-1\}\}$ is the windmill graph %with parameters $d$ and $d-1$. 
obtained by taking $d$ copies of the complete graph $K_d$ with a vertex in common. 
Using the test function $\widetilde{f}:V\to\mathbb{R}$ defined as 
$$\widetilde{f}(w)=\begin{cases}
\frac12\big(d-2+\sqrt{d^2+4(d-1)^2}\big),&\text{ if }w=u\\
-1,&\text{ if }w=w_{i,j}\text{ for some }(i,j),
\\0,&\text{ otherwise}
\end{cases}$$ 
we derive 
\begin{align*}
\min_{\lambda\in \sigma(\mathbf{A})}|\lambda|^2&\le \mathcal{R}_{\mathbf{A}^2}(\widetilde{f})= d+\frac{2\sum\limits_{i=1}^{d}\Big(\sum\limits_{j=1}^{d-1}{\widetilde{f}(u)\widetilde{f}(w_{i,j})}+\sum\limits_{\{j,j'\}\subset\{1,\cdots,d-1\}}\widetilde{f}(w_{i,j})\widetilde{f}(w_{i,j'})\Big)}{\widetilde{f}(u)^2+\sum_{i=1}^{d^2-d}\widetilde{f}(v_i)^2}
\\&=d-\frac{\sqrt{d^2+4(d-1)^2}-(d-2)}{2}=d-c_d
\end{align*}
where $c_d$ is the positive root of the quadratic equation $t^2+(d-2)t-d(d-1)=0$. 
This completes the proof. %Note that $$ d-\sqrt{d^2-d} =\frac{d}{d+\sqrt{d^2-d}}=\frac{1}{1+\sqrt{1-\frac1d}}\in(\frac12,3-\sqrt{6}]$$ is decreasing but bounded. 

\section{%Further d
Discussions and open problems}\label{section:discuss}

\begin{itemize}

\item
%Our result
As stated in Remark \ref{remark:th1-equivalent}, Theorem \ref{th:main} is a spectral gap reformulation of the work on the HL-index of bipartite graphs by Mohar and Tayfeh-Rezaie \cite{Mohar15}. %since for any bipartite graph $G$, the HL-index $R(G)$ is equal to the adjacency spectral gap from 0. 
 There is an open problem asking whether the HL-index of every graph in $\mathcal{G}_{\le d}$ is bounded by $%\mathrm{R}(d)=
 \sqrt{d-1}$ 
 when $d-1$ is a prime power \cite{Mohar}. In some sense, Theorem \ref{th:main} establishes a weak version of this conjecture, and has strengthened the belief in this conjecture. 
 
For $d$-regular graphs with girth at least 7, the constant $c_d$ appearing in Proposition \ref{thm:girth7} satisfies $c_3=2$ and $\lim_{d\to+\infty}c_d/d=(\sqrt{5}-1)/2\approx 0.618$. Thus, if we work on regular graphs with large degrees and girths, the upper bound $\sqrt{d-1}$ for the adjacency spectral gap from 0 is significantly improved to $\sqrt{d-c_d}$. 

%In some sense, it gives an answer to such weak form of Mohar's question. 

\item
We note that the extremal graphs for adjacency spectral gap from 0 (Theorem \ref{th:main}) and that for normalized Laplacian spectral gap from 1 (Theorem \ref{th:main2} and Theorem \ref{thm:d-graph-projective}) coincide. Precisely, the following equality characterize the family of the incidence graphs of finite projective planes of order $d-1$:
%normalized Laplacian eigenvalue, the extremal graphs for $\min_{\lambda\in\sigma(\mathbf{L})} |\lambda-1|$ among $\mathcal{G}_{\ge d}$ coincide with that for $\min |\lambda|$  adjacency eigenvalues among $\mathcal{G}_{\le d}$. %It is a very interesting 
$$\left\{G\in\mathcal{G}_{\le d}\left|\min_{\lambda\in \sigma(\mathbf{A})}|\lambda|= \sqrt{d-1}\right.\right\}=\left\{G\in\mathcal{G}_{\ge d}\left|\min\limits_{\lambda\in\sigma(\mathbf{L})}|\lambda-1|= \frac{\sqrt{d-1}}{d}\right.\right\}$$

%\item
It should be noted that the proof for the normalized Laplacian case is much more difficult as the interlacing property no longer holds. 

\item %使用D而不是L，我们更容易意识到这种对比反差的奇妙。
By using the normalized adjacency matrix $\mathbf{D}^{-1}\mathbf{A}$ instead of $\mathbf{L}$, we can better appreciate the marvel of this contrast: 
\begin{itemize}
    \item 
For any connected graph $G$ with {maximum} degree ${\large\le }~ d$, the spectral gap from 0 with respect to $\mathbf{A}$ is at most $\sqrt{d-1}$, with equality if and only if $G$ is the incidence graph of a projective plane of order $d-1$. 
\item
For any connected graph $G$ with {\textbf{minimum}} degree ${\large\ge }~ d$, the spectral gap from 0 with respect to $\mathbf{D}^{-1}\mathbf{A}$ is at most $\sqrt{d-1}/d$, with equality if and only if $G$ is the incidence graph of a projective plane of order $d-1$. 
\end{itemize}

In other words, Theorems \ref{th:main} and \ref{th:main2} indeed show that for $d\ge 3$,
\begin{align*}
&\left\{G\in\mathcal{G}_{\le d}\left|\min_{\lambda\in \sigma(\mathbf{A})}|\lambda|= \sqrt{d-1}\right.\right\}
=\left\{G\in\mathcal{G}_{\ge d}\left|\min\limits_{\lambda\in\sigma(\mathbf{D}^{-1}\mathbf{A})}|\lambda|= \frac{\sqrt{d-1}}{d}\right.\right\}
\\=~&~\big\{\text{incidence graphs of }PG(d-1,2)\big\}
\end{align*}
where $\sigma(\mathbf{D}^{-1}\mathbf{A})$ denotes the spectrum of $\mathbf{D}^{-1}\mathbf{A}$, and $PG(d-1,2)$ denotes a projective plane of order $d-1$. 

While Theorem \ref{thm:d=2} shows that when $d=2$, the situation is entirely different:
\begin{itemize}
    \item
For any connected graph $G$ with maximum degree $\le 2$, the spectral gap from 0 with respect to $\mathbf{A}$ is at most $1$, with equality if and only if $G$ is a triangle or a hexagon. 
\item
For any connected graph $G$ with {\textbf{minimum}} degree $\ge 2$, the spectral gap from 0 with respect to $\mathbf{D}^{-1}\mathbf{A}$ is at most $1/2$, with equality if and only if $G$ is a friendship graph or a book graph. 
\end{itemize}
In short, this means
$$\left\{G\in\mathcal{G}_{\le 2}\left|\min_{\lambda\in \sigma(\mathbf{A})}|\lambda|= 1\right.\right\}\subsetneqq\left\{G\in\mathcal{G}_{\ge 2}\left|\min\limits_{\lambda\in\sigma(\mathbf{D}^{-1}\mathbf{A})}|\lambda|= \frac{1}{2}\right.\right\}$$
\item
%It is worth noting that the constant $\sqrt{d-1}$ also appears in many other important results. Marcus, Spielman and Srivastava \cite{Marcus15} prove that there exist infinitely many $d$-regular graphs whose nontrivial eigenvalues are all in $(-2\sqrt{d-1},2\sqrt{d-1})$. These are the so-called Ramanujan graphs \cite{Lubotzky-Phillips-Sarnak}. Such interval $(-2\sqrt{d-1},2\sqrt{d-1})$ is optimal, as Alon–Boppana’s result \cite{Chiu92,Nilli91} implies that for any $\epsilon>0$, a $d$-regular graph with a sufficient number of vertices always has non-trivial eigenvalues that fall outside the interval $(-2\sqrt{d-1}+\epsilon,2\sqrt{d-1}-\epsilon)$. 
Following Koll\'ar and Sarnark \cite{Kollar20}, a \emph{gap interval} for the normalized Laplacian spectra of graphs in $\mathcal{G}$ is an open interval such that there are infinitely many graphs in $\mathcal{G}$ whose normalized Laplacian spectrum does not intersect the interval. 
Theorem \ref{thm:d=2} implies that $(\frac12,\frac32)$ is a maximal gap interval for the normalized Laplacian spectra of graphs in $\mathcal{G}_{\ge 2}$. On the other hand, Theorem \ref{thm:d-graph-projective} implies that $(1-\frac{\sqrt{d-1}}{d},1+\frac{\sqrt{d-1}}{d})$ is \emph{not} a gap interval for the normalized Laplacian spectra of graphs in $\mathcal{G}_{\ge d}$ when $d\ge 3$. 
%except for the incidence graph of a finite projective plane of order $d-1$, all other $d$-regular graphs always have eigenvalues falling within the interval $(-\sqrt{d-1},\sqrt{d-1})$ (in fact, this can be improved to a smaller interval $[-\sqrt{d-2},\sqrt{d-2}]$. %and we conjecture that the endpoints can of the closed interval can be removed). 

\item 
By Theorem \ref{th:main}, there is no graph $G\in\mathcal{G}_{\le d}$ with $\min_{\lambda\in \sigma(\mathbf{A})}|\lambda|\in (\sqrt{d-2},\sqrt{d-1})$. 
One may expect to see a similar statement on the normalized Laplacian, that is, there is no graph 
$G\in\mathcal{G}_{\ge d}$ with $\min_{\lambda\in \sigma(\mathbf{L})}|\lambda-1|\in (\frac{\sqrt{d-2}}{d},\frac{\sqrt{d-1}}{d})$. 
However, %if we use the normalized Laplacian, there may exist
it seems that this statement is false when $d=3$. The reason presented below is inspired by \cite{Mohar15}.

It is known that the incidence graph of a biplane $(\mathrm{v},d,2)$ has degree $d$ and smallest adjacency eigenvalue $\sqrt{d-2}$ in absolute value. Since the biplanes $(\mathrm{v},d,2)$ exist when $d\in \{ 2, 3, 4, 5, 6, 9, 11, 13\}$ (see \cite{Hall}), there exist $3$-regular graphs with $\mathrm{gap}(G)=\frac{1}{3}$, $4$-regular graphs with $\mathrm{gap}(G)=\frac{\sqrt{2}}{4}$, $5$-regular graphs with $\mathrm{gap}(G)=\frac{\sqrt{3}}{5}$, and $6$-regular graphs with $\mathrm{gap}(G)=\frac13$. Note that $\frac13<\frac{\sqrt{3}}{5}<\frac{\sqrt{2}}{4}<\frac{\sqrt{2}}{3}$. So, there exists $G\in\mathcal{G}_{\ge 3}$ with $\mathrm{gap}(G)=\frac{\sqrt{2}}{4}\in (\frac13,\frac{\sqrt{2}}{3})$. 

\end{itemize}

%\subsection{Open problems}

We present some questions related to the main theorems in this paper.

%\begin{ques}
%Suppose that there exists a finite projective plane of order $d-1$. Determine all the $d$-super-regular graphs satisfying $\mathrm{gap}(G)=\sqrt{d-1}/d$, i.e., characterizing the set $\{G\in \mathcal{G}_{\ge d}:\mathrm{gap}(G)=\sqrt{d-1}/d\}$.    
%\end{ques}

\begin{ques}\label{que:1}
Suppose that $d\ge 3$ and $d-1$ is not the order of any finite projective plane. Determine the exact values of $\mathbf{gap}(\mathcal{G}_{=d})$ and $\mathbf{gap}(\mathcal{G}_{\ge d})$, respectively. 
\end{ques}

We conjecture that the extremal graphs for $\mathbf{gap}(\mathcal{G}_{=d})$ and $\mathbf{gap}(\mathcal{G}_{\ge d})$ are incidence graphs of the $(\mathrm{v},d,\lambda)$ designs.

\begin{ques}\label{que:2}
Are the finite projective planes the extremal graphs of the spectral gap from the average of eigenvalues with respect to the graph \emph{unnormalized} Laplacian?
\end{ques}

If the answer to Question \ref{que:2} is affirmative, it would be very interesting to explore \emph{the spectral gap from average}, and study which graph matrix satisfies this property.

%\begin{ques} Is it true that for any $G\in\mathcal{G}_{\ge d}$  other than incidence graphs of projective planes of order $d-1$, $$\mathrm{gap}(G)\le\frac{\sqrt{d-2}}{d}\;\;?$$\end{ques}

%What is the exact value of $\mathbf{gap}(\mathcal{G}_{\ge d})$ ?

%\section*{Funding}This work is supported by grants from the National Natural Science Foundation of China (No. 12401443).
 
\section*{Acknowledgments} The authors are very grateful to Zilin Jiang for pointing out a simple approach to Theorem \ref{th:main} by employing the results of Mohar and Tayfeh-Rezaie. 
This work is supported by grants from the National Natural Science Foundation of China (No. 12401443).
%The authors are grateful to the anonymous referees for the comments and suggestions.

\section*{Appendix}

\begin{proof}[Proof of Lemma \ref{lemma:phi}]
Fixed a vertex $x\in V$, let $\mathcal{N}(x)=\{y_1,\cdots,y_d\}$ and $$\mathcal{N}(y_k)=\{x,z_{k,1},\cdots,z_{k,d-1}\}$$ where $k=1,\cdots,d$. We claim $z_{k,l}\ne z_{k^*,l^*}$ whenever $(k,l)\ne(k^*,l^*)$. In fact, we will prove that $z_{k,l}= z_{k^*,l^*}$ implies $(k,l)=(k^*,l^*)$. %then there are two cases: $k=k^*$, or $k\ne k^*$. 

Suppose $z_{k,l}= z_{k^*,l^*}$. 
If $k\ne k^*$, then $|\mathcal{N}(y_k)\cap\mathcal{N}(y_{k^*})|\ge2$ as $x$ and $z_{k,l}= z_{k^*,l^*}$ are two distinct vertices in $\mathcal{N}(y_k)\cap\mathcal{N}(y_{k^*})$, but this contradicts the 4-cycle free condition. %(see Lemma \ref{lemma:=d-no-4-cycle}). 
Hence, we have $k=k^*$. Since $z_{k,l},z_{k,l^*}\in \mathcal{N}(y_k)$, $z_{k,l}= z_{k,l^*}$ implies $l=l^*$, meaning that $(k,l)=(k^*,l^*)$. %, a contradiction. 

Next, we prove that $\mathcal{N}_{\phi(G)}(x)=\{z_{k,l}\mid1\le k\le d,1\le l\le d-1\}$, and thus $|\mathcal{N}_{\phi(G)}(x)|=d^2-d$. 

On the one hand, if $w\in\mathcal{N}_{\phi(G)}(x)$, then there exists $y\in\mathcal{N}(x)\cap\mathcal{N}(w)$, and hence there is $k$ such that $y=y_k$, and subsequently, there is $l$ such that $w=z_{k,l}$. Therefore, $$\mathcal{N}_{\phi(G)}(x)\subseteq\{z_{k,l}\mid1\le k\le d,1\le l\le d-1\}.$$ 
On the other hand, for any $z_{k,l}\in\{z_{k,l}\mid1\le k\le d,1\le l\le d-1\}$, we have $y_k\in \mathcal{N}(x)\cap\mathcal{N}(z_{k,l})$ and therefore, $z_{k,l}\in \mathcal{N}_{\phi(G)}(x)$. The proof is then completed.
\end{proof}

\begin{proof}[Proof of Proposition \ref{pro:non-bipartite=phiG-connected}]
If $G$ is bipartite, then clearly $\phi(G)$ has exactly two connected components. We refer to \cite[Lemma 5.3]{Bauer13} for details. 

Suppose that $G$ is non-bipartite. 
Then there exists an odd cycle in $G$.  We shall prove that for any two distinct vertices $u$ and $v$ in $G$, there exists a path of even length connecting $u$ and $v$. %Suppose the contrary, that there exist $u\ne v\in V$ such that every path 
Let $C$ be an odd cycle in $G$, and fix a vertex $w\in C$. Consider a shortest path from $u$ to $w$, and a shortest path from $w$ to $v$. If the path $u\sim w\sim v$ made up of the two shortest paths is of even length, then the proof is done. Otherwise, the length of the path $u\sim w\sim v$ made up of the two shortest paths is odd, and then the odd-length cycle $C$ can be further merged to create an even-length path $u\sim w\mathop{\sim}\limits^C w\sim v$ connecting $u$ and $v$. %The proof is then completed. 

Note that an even-length path connecting $u$ and $v$ in $G$ generates a path connecting $u$ and $v$ in $\phi(G)$. Therefore, $\phi(G)$ is connected. 
\end{proof}

\end{document}